\theoremstyle{plain}
 \newtheorem{theorem}{Theorem}
\theoremstyle{plain}
 \newtheorem{proposition}{Proposition}
 \theoremstyle{definition}
 \newtheorem{definition}{Definition}
\begin{document}

\title{Harmonic Spheres in the Hilbert--Schmidt Grassmannian}
\author{Iuliya BELOSHAPKA, Armen SERGEEV}

\address{Moscow State University, Steklov Mathematical Institute}
\date{}
\thanks{While preparing this paper the second author was partly supported
by the RFBR grant 13-01-00622, the Leading Scientific Schools
program (grant NSh-2928.2012.1), and Scientific Program of Presidium
of RAS ''Nonlinear dynamics''.}

\dedicatory{To Sergei Petrovich Novikov on the occasion of his 75th
birthdate}

\date{}

\subjclass{Primary 58E20, 53C28,32L25}

\begin{abstract}
We give the twistor description of harmonic maps of the Riemann
sphere into the Hilbert--Schmidt Grassmannian. The study of such
maps is motivated by the harmonic spheres conjecture formulated in
the beginning of this paper.
\end{abstract}

\maketitle

\bigskip


\section*{Introduction}


A well known theorem of Atiyah \cite{Ati} establishes a 1--1
correspondence between the moduli space of $G$-instantons on
4-dimensional Euclidean space $\mathbb R^4$ and the space of based
holomorphic maps of the Riemann sphere $\mathbb P^1$ into the loop
space $\Omega G$ of a compact Lie group $G$. The harmonic spheres
conjecture, obtained from this formulation by ''realification'',
asserts that it should exist a natural 1--1 correspondence between
the moduli space of Yang--Mills $G$-fields on $\mathbb R^4$ and the
space of based harmonic maps of the Riemann sphere $\mathbb P^1$
into the loop space $\Omega G$.

The proof of the Atiyah theorem depends on the Donaldson theorem
\cite{Don}, establishing a 1--1 correspondence between the moduli
space of $G$-instantons on $\mathbb R^4$ and the set of equivalence
classes of holomorphic $G^\mathbb C$-bundles over $\mathbb
P^1\times\mathbb P^1$ which are trivial on the union $\mathbb
P^1_\infty\cup\,\mathbb P^1_\infty$ of projective lines at
''infinity''. The Donaldson theorem may be considered as a
2-dimensional reduction of the well known Atiyah--Ward theorem,
relating instantons on $\mathbb R^4$ with holomorphic bundles on the
3-dimensional projective space $\mathbb P^3$ which are trivial on
the fibres of the twistor bundle $\mathbb P^3\setminus\mathbb
P^1_\infty\to\mathbb R^4$. The proof of Donaldson theorem is based
on the monad method, used for the construction of holomorphic vector
bundles on projective spaces, and is ''purely complex''. So to prove
the harmonic spheres conjecture one should look for a ''real''
analogue of Donaldson theorem for harmonic bundles.

Such an analogue was proposed in \cite{bogo100} where it was also
outlined an idea of the proof of the harmonic spheres conjecture.
One of its essential ingredients is the twistor description of
harmonic spheres in the Hilbert--Schmidt Grassmannian which is the
main subject of this paper.

\bigskip


\section{Motivation: harmonic spheres conjecture}
\label{sec1}


The motivation to study harmonic maps of the Riemann sphere into the
Hilbert--Schmidt Grassmannian comes from the harmonic spheres
conjecture which is formulated below.

We start by recalling some necessary facts about instantons and
Yang--Mills fields on the one hand, and harmonic maps on the other
hand.


\subsection{Instantons and Yang--Mills fields}
\label{ssec11}


\medskip

Let $G$ be a compact Lie group and $A$ is a $G$-connection on
$\mathbb R^4$ given by the 1-form
$$
A=\sum_{\mu=1}^4 A_\mu(x)dx_\mu
$$
with smooth coefficients $A_\mu(x)$, taking values in the Lie
algebra $\mathfrak g$ of $G$. Denote by $F_A$ the curvature of $A$
given by the 2-form
$$
F_A=\sum_{\mu,\nu=1}^4 F_{\mu\nu}(x)dx_\mu\wedge dx_\nu
$$
with coefficients
$$
F_{\mu\nu}=\partial_\mu A_\nu-\partial_\nu A_\mu+[A_\mu,A_\nu]
$$
where $\partial_\mu:=\partial/\partial x_\mu$, $\mu=1,2,3,4$, and
$[\cdot,\cdot]$ denotes the commutator in Lie algebra $\mathfrak g$.

Define the \textit{Yang--Mills action} functional by the formula
$$
S(A)=\frac12\int_{\mathbb R^4}\text{tr}(F_A\wedge *F_A)
$$
where $*$ is the Hodge operator on $\mathbb R^4$, and the trace
$\text{tr}$ is computed with the help of a fixed invariant inner
product on the Lie algebra $\mathfrak g$.

The functional $S(A)$ is invariant under the \textit{gauge
transformations} given by
$$
A\longmapsto A_g:=g^{-1}dg+g^{-1}Ag
$$
where $g:\mathbb R^4\to G$ is a smooth map, and $G$ acts on its Lie
algebra $\mathfrak g$ by the adjoint representation.

The extremals of the functional $S(A)$ with finite action
$S(A)<\infty$ are called the \textit{Yang--Mills fields}.

Yang--Mills fields have an integer-valued topological invariant,
called the \textit{topological charge}, computed by the formula
$$
k(A)=\frac1{8\pi^2}\int_{\mathbb R^4}\text{tr}(F_A\wedge F_A).
$$
If we write $F_A$ in the form
$$
F_A=F_++F_-
$$
where $F_{\pm}=\frac12(* F_A\pm F_A)$ then the formulae for the
action and charge may be rewritten as
\begin{align*}
S(A) &=\frac12\int_{\mathbb R^4}\left(\Vert F_+\Vert^2+\Vert
F_-\Vert^2\right)d^4x,\\
k(A) &=\frac1{8\pi^2}\int_{\mathbb R^4}\left(-\Vert F_+\Vert^2+\Vert
F_-\Vert^2\right)d^4x
\end{align*}
where the norm $\Vert\cdot\Vert^2$ is computed with the help of
invariant inner product on the Lie algebra $\mathfrak g$.

By comparing the last two formulae we arrive at the inequality
$$
S(A)\geq 4\pi^2|k(A)|
$$
where the equality is attained for $k>0$ on solutions of the
equation
\begin{equation}
\label{eq1}
* F_A=-F_A,
\end{equation}
and for $k<0$ on solutions of the equation
\begin{equation}
\label{eq2} * F_A=F_A.
\end{equation}

\begin{definition}
Solutions of equation \eqref{eq1} with $S(A)<\infty$ are called
\textit{instantons}, and solutions of equation \eqref{eq2} with
$S(A)<\infty$ are called \textit{anti-instantons}.
\end{definition}

Instantons and anti-instantons realize local minima of the action
$S(A)$, however, there exist also non-minimal critical points of
this functional.

\bigskip


\subsection{Harmonic spheres} \label{ssec12}


\medskip

Let $\varphi:\mathbb P^1\to N$ be a smooth map from the Riemann
sphere $\mathbb P^1$ into an oriented Riemannian manifold $N$. We
call such a map \textit{harmonic} if it is extremal with respect to
the energy functional given by the following Dirichlet integral
$$
E(\varphi)=\frac12\int_{\mathbb C}|d\varphi|_N^2\frac{|dz\wedge
d\bar z|}{(1+|z|^2)^2}
$$
where the modulus of differential $d\varphi$ is computed with
respect to the metric $|\cdot|_N$ of manifold $N$.

If the manifold $N$ is K\"ahler, i.e. $N$ has a complex structure,
compatible with the Riemannian metric, then holomorphic and
anti-holomorphic maps $\varphi:\mathbb P^1\to N$ realize local
minima of the energy functional $E(\varphi)$. However, for
$\text{dim}_{\mathbb C}N>1$ there exist usually also non-minimal
harmonic maps.

Comparing harmonic maps with Yang-Mills fields, we notice an evident
analogy between:
$$
\left\{\parbox{4,3cm}{(anti)holomorphic
maps}\right\}\longleftrightarrow
\left\{\parbox{2,9cm}{(anti)instantons}\right\}
$$
and
$$
\left\{\parbox{2,8cm}{harmonic maps}\right\}\longleftrightarrow
\left\{\parbox{3,1cm}{Yang--Mills fields}\right\}\ .
$$
We are going to give a mathematical justification of this
observation.


\subsection{Twistor interpretation of instantons}
\label{ssec13}


\medskip

The twistor interpretation of instantons is based on the following
\textit{twistor bundle}
\begin{equation}
\label{fibre2} \pi: \mathbb P^3\setminus\mathbb
P^1_\infty\longrightarrow\mathbb R^4
\end{equation}
over the Euclidean space $\mathbb R^4$ where $\mathbb P^3$ is the
3-dimensional complex projective space (cf. \cite{Ati-book}). The
fibre of this bundle at $x\in\mathbb R^4$ can be identified with the
space of complex structures on the tangent space $T_x\mathbb
R^4\cong\mathbb R^4$, compatible with metric and orientation.

In terms of the twistor bundle $\pi: \mathbb P^3\setminus\mathbb
P^1\to\mathbb R^4$ the \textit{moduli space of $G$-instantons}, i.e.
the quotient of the space of all $G$-instantons on $\mathbb R^4$
modulo gauge transformations, admits the following interpretation
given by the \textit{Atiyah--Ward theorem}:
$$
\left\{\parbox{3,5cm}{moduli space of $G$-instatons on $\mathbb
R^4$}\right\}\longleftrightarrow \left\{\parbox{8cm}{(based)
equivalence classes of holomorphic $G^{\mathbb C}$-bundles over
$\mathbb P^3$, holomorphically trivial on $\pi$-fibers}\right\}\ .
$$
This result has the following 2-dimensional reduction to the space
$\mathbb P^1\times\mathbb P^1$ given by the Donaldson theorem:
$$
\left\{\parbox{3,7cm}{moduli space of $G$-instantons on $\mathbb
R^4$}\right\}\longleftrightarrow \left\{\parbox{8,7cm}{(based)
equivalence classes of holomorphic $G^{\mathbb C}$-bundles over
$\mathbb P^1\times\mathbb P^1$, holomorphically trivial on the union
$\mathbb P^1_\infty\cup\mathbb P^1_\infty$}\right\}\ .
$$


\subsection{Twistor interpretation of harmonic spheres}
\label{ssec14}


\medskip

Using the interpretation of the twistor bundle over $\mathbb R^4$ as
the bundle of compatible complex structure, one can extend its
definition to any even-dimensional oriented Riemannian manifold $N$.
Namely, the twistor bundle $\pi:Z\to N$ over $N$ is defined as the
bundle of complex structures on $N$, compatible with metric and
orientation. The fibre of this bundle can be identified with the
homogeneous space $\text{SO}(2n)/\text{U}(n)$ where $2n$ is the
dimension of $N$. The twistor space $Z$ can be provided, as it was
shown in \cite{AHS}, with a natural almost complex structure,
denoted by $\mathcal J^1$.

However, for the description of harmonic spheres in $N$ we have to
employ another almost complex structure which is defined in the
following way. The Levi-Civita connection on $N$ generates a
connection on the twistor bundle $\pi:Z\to N$. The new almost
complex structure on $Z$, denoted by $\mathcal J^2$, is equal to
$\mathcal J^1$ in the directions, horizontal with respect to the
introduced connection, and to $-\mathcal J^1$ in vertical
directions. This structure was introduced in \cite{Ee-Sal} and is
always non integrable. Harmonic spheres in $N$ admit the following
description in terms of this almost complex structure.

\begin{theorem}[Eells--Salamon \cite{Ee-Sal}]
\label{ee-sal} Projections $\varphi=\pi\circ\psi$ of the maps
$\psi:\mathbb P^1\to Z$, holomorphic with respect to the almost
complex structure $\mathcal J^2$, are harmonic.
\end{theorem}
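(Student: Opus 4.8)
The plan is to compute the tension field of $\varphi=\pi\circ\psi$ and show it vanishes whenever $\psi$ is $\mathcal J^2$-holomorphic, working in an adapted local frame along the image of $\psi$. First I would recall that harmonicity of $\varphi:\mathbb P^1\to N$ is equivalent to the vanishing of the $(0,1)$-part of $\bar\partial(\partial\varphi)$ with respect to the induced connection, i.e. to the equation $D''_{\bar z}(\partial_z\varphi)=0$ where $D''$ is the $(0,1)$-component of the pullback Levi-Civita connection on $\varphi^*TN\otimes\mathbb C$. The key structural fact I would exploit is that the connection on $\pi:Z\to N$ induced by Levi-Civita splits $TZ=\mathcal H\oplus\mathcal V$ into horizontal and vertical parts, that $d\pi$ identifies $\mathcal H_{\psi(p)}$ isometrically with $T_{\varphi(p)}N$, and that at the point $\psi(p)$ the horizontal lift of the complex structure $\mathcal J^1$ agrees with the tautological complex structure $J_{\psi(p)}$ that the twistor fibre point \emph{is}. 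Thus $\psi$ being $\mathcal J^2$-holomorphic means that $d\varphi=d\pi\circ d\psi$ intertwines the standard complex structure on $\mathbb P^1$ with $J_{\psi(p)}$ along the horizontal directions, while the sign flip in the vertical directions is what will make the harmonic (rather than holomorphic) equation come out.

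Next I would set up the computation: choose a local unitary frame $e_1,\dots,e_n$ of $(T N, J)^{1,0}$ along a neighbourhood of the image, pulled back via $\psi$, so that $\partial_z\varphi$ is a section of $\varphi^*T^{1,0}N$ (this uses precisely the horizontal $\mathcal J^1=J_{\psi(p)}$ matching). Then $D''_{\bar z}(\partial_z\varphi)$ decomposes, via the $\mathfrak{so}(2n)=\mathfrak{u}(n)\oplus\mathfrak m$ splitting of the connection form, into a $\mathfrak u(n)$-part (which preserves type and contributes a term that is automatically of the form ``$\bar\partial$ of a $(1,0)$-section,'' hence contributes only to the $(1,0)$-part of the tension, which we will see cancels) and an $\mathfrak m$-part, which is exactly the second fundamental form / the vertical component of $d\psi$. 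The $\mathcal J^2$-holomorphicity of $\psi$ forces the vertical component of $\partial_z\psi$ to be of type $(0,1)$ relative to $\mathcal J^1$ on the fibre (because $\mathcal J^2=-\mathcal J^1$ there), and a short computation then shows that the $\mathfrak m$-valued term appearing in $D''_{\bar z}(\partial_z\varphi)$ pairs a $(0,1)$ fibre-vector with $\partial_z\varphi\in T^{1,0}$ in such a way that its contribution to the $(1,0)$-part of the tension field is the conjugate of, and cancels, the naive ``holomorphicity defect.'' Equivalently: an almost-complex map into $(Z,\mathcal J^1)$ would give a holomorphic, hence harmonic, projection, and flipping the vertical structure to $\mathcal J^2$ changes the vertical contribution by a sign that is precisely the discrepancy between ``$\varphi$ holomorphic'' and ``$\varphi$ harmonic'' — this is the classical observation that the tension field of $\varphi$ equals (a multiple of) the vertical part of the relevant covariant derivative.

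I would then assemble these pieces: write $\tau(\varphi)=2g^{z\bar z}D''_{\bar z}\partial_z\varphi$, expand using the frame and the connection splitting, substitute the two constraints coming from $\mathcal J^2$-holomorphicity (horizontal part holomorphic, vertical part anti-holomorphic with respect to $\mathcal J^1$), and check that all terms cancel. A clean way to organize the cancellation is to use that $\mathcal J^2$-holomorphicity of $\psi$ is equivalent to the pair of equations: (i) $d\varphi\circ j = J_{\psi}\circ d\varphi$ (horizontal), and (ii) the vertical part $V(\partial_z\psi)$ lies in the $(0,1)$-eigenspace of $\mathcal J^1$ on $T^v Z$; and that the Eells--Salamon connection relates $V(\partial_z\psi)$ to $\nabla_{\partial_z}J$ acting on $d\varphi$, so $\tau(\varphi)$ becomes an expression in $(\nabla J)(\partial\varphi)$ whose $(1,0)$-component vanishes by (ii).

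The main obstacle I expect is bookkeeping rather than conceptual: correctly matching the sign conventions between the almost complex structure $\mathcal J^2$ (with its deliberate sign flip on $\mathcal V$), the identification of the twistor fibre point with an honest complex structure $J$ on $T_{\varphi(p)}N$, and the standard formula for the tension field — getting these three sign conventions consistent is where an error is most likely to creep in, and it is the step I would check most carefully, ideally by also verifying the statement on the model case $N=S^4$, $Z=\mathbb P^3$, where $\mathcal J^1$-holomorphic maps recover instanton data and $\mathcal J^2$-holomorphic maps the full Yang--Mills picture, so the computation can be cross-checked against known examples.
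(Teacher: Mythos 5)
The paper does not prove this statement: Theorem~\ref{ee-sal} is quoted verbatim from Eells--Salamon \cite{Ee-Sal}, so there is no internal proof to compare against. Your sketch reconstructs the standard argument from that reference correctly in its essentials: harmonicity reduces to $\bigl(\nabla_{\partial/\partial\bar z}\,\partial_z\varphi\bigr)^{0,1}=0$; differentiating $J_\psi\,\partial_z\varphi=i\,\partial_z\varphi$ shows this $(0,1)$-part is proportional to $(\nabla_{\partial/\partial\bar z}J_\psi)(\partial_z\varphi)$, i.e.\ to the vertical component of $d\psi$ applied to $\partial_z\varphi$; and the sign flip $\mathcal J^2=-\mathcal J^1$ on the fibre is exactly what forces that term to vanish. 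Two caveats. First, the decisive cancellation is asserted rather than carried out, and the reality of $\nabla_{\partial/\partial\bar z}\partial_z\varphi$ (needed so that vanishing of its $(0,1)$-part kills the whole tension field) is only implicit; these would have to be written out for a complete proof, though neither is in doubt. Second, the parenthetical claim that a $\mathcal J^1$-holomorphic $\psi$ ``would give a holomorphic, hence harmonic, projection'' is false as stated: $N$ carries no complex structure in general, and projections of $\mathcal J^1$-holomorphic curves are not harmonic unless additional conditions (e.g.\ horizontality) hold --- running your own computation with $\mathcal J^1$ in place of $\mathcal J^2$ on the vertical distribution makes the two half-terms add rather than cancel. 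This aside is not load-bearing, but it should be removed or corrected.
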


This theorem allows to construct harmonic spheres in the manifold
$N$ from almost holomorphic spheres in its twistor space $Z$. And
this is an idea of the twistor approach to harmonic maps --- to try
to reduce the original ''real'' problem of construction of harmonic
spheres in the Riemannian manifold $N$ to the ''complex'' problem of
construction of holomorphic spheres in the almost complex manifold
$Z$.

We show in Section~\ref{ssec22} how this idea can be applied to the
description of harmonic spheres in the complex Grassmann manifold
$G_r(\mathbb C^d)$.


\subsection{Atiyah theorem}
\label{ssec15}


\medskip

We switch now to the case when the target Riemannian manifold $N$ is
infinite-dimensional, more precisely, the loop space of a compact
Lie group.

Recall first the definition of this space. Let $G$ be a compact Lie
group. Consider the quotient
$$
\Omega G:=LG/G
$$
of the loop group $LG=C^\infty(S^1,G)$ modulo the subgroup of
constant maps $S^1\to g_0\in G$ identified with the group $G$. The
space $\Omega G$ is a Frechet manifold which has an $LG$-invariant
complex structure (cf. \cite{Ser-book}). This structure is induced
from the representation of $\Omega G$ as the quotient of the complex
loop group $LG^\mathbb C$:
$$
\Omega G=LG^\mathbb C/L_+G^\mathbb C
$$
where $G^\mathbb C$ is the complexification of the group $G$ and the
subgroup $L_+G^\mathbb C$ consists of the loops $\gamma\in
LG^\mathbb C$ which can be smoothly extended to holomorphic maps of
the unit disc $\Delta$ into $G^\mathbb C$. Apart from the complex
structure $\Omega G$ has also an $LG$-invariant symplectic structure
(cf. \cite{Ser-book}) which is compatible with the introduced
complex structure in the sense that they generate together a natural
Riemannian metric on $\Omega G$.

Recall that by Donalson theorem the moduli space of $G$-instantons
can be identified with:
$$
\left\{\parbox{3,7cm}{moduli space of $G$-instantons on $\mathbb
R^4$}\right\}\longleftrightarrow \left\{\parbox{8,7cm}{(based)
equivalence classes of holomorphic $G^{\mathbb C}$-bundles over
$\mathbb P^1\times\mathbb P^1$, holomorphically trivial on the union
$\mathbb P^1_\infty\cup\mathbb P^1_\infty$}\right\}\ .
$$
Atiyah theorem asserts that the right hand side of this
correspondence may be identified with the space of holomorphic
spheres in $\Omega G$. In more detail, there is a 1--1
correspondence between:
$$
\left\{\parbox{6cm}{(based) equivalence classes of holomorphic
$G^{\mathbb C}$-bundles over $\mathbb P^1\times\mathbb P^1$,
holomorphically trivial on the union $\mathbb P^1_\infty\cup\mathbb
P^1_\infty$}\right\}\longleftrightarrow \left\{\parbox{5,5cm}{based
holomorphic spheres $f:\mathbb P^1\to\Omega G$, sending $\infty$
into the origin of $\Omega G$}\right\}\ .
$$

\bigskip


\subsection{Harmonic spheres conjecture}
\label{ssec16}


\medskip

The Donaldson and Atiyah theorems imply that there is a 1--1
correspondence between:
$$
\left\{\parbox{3,5cm}{moduli space of $G$-instantons on $\mathbb
R^4$}\right\}\longleftrightarrow \left\{\parbox{5cm}{based
holomorphic spheres $f:\mathbb P^1\to\Omega G$}\right\}\ .
$$
So we have a correspondence between local minima of two functionals,
introduced before, namely the Yang--Mills action, defined on gauge
$G$-fields on $\mathbb R^4$, and the energy functional, defined on
smooth spheres in the loop space $\Omega G$. Recall that the local
minima of Yang--Mills action are given by instantons and
anti-instantons on $\mathbb R^4$ while the local minima of energy
are given by holomorphic and anti-holomorphic spheres in $\Omega G$.
Replacing local minima by critical points of these functionals, we
arrive at the \textit{harmonic spheres conjecture} asserting that it
should exist a 1--1 correspondence between:
$$
\left\{\parbox{5cm}{moduli space of Yang--Mills $G$-fields on
$\mathbb R^4$}\right\}\longleftrightarrow
\left\{\parbox{4,8cm}{based harmonic spheres $f:\mathbb P^1\to\Omega
G$}\right\}\ .
$$
The described transition from the local minima to critical points
may be considered as a kind of the ''realification'' procedure.
Indeed, if we replace smooth spheres in the right hand side of the
diagram by smooth functions $f:\mathbb C\to\mathbb C$ then the above
procedure will reduce to the replacement of holomorphic and
anti-holomorphic functions by arbitrary harmonic functions (which
can be represented as sums of holomorphic and anti-holomorphic
functions). In the case of smooth spheres in $\Omega G$ this
transition from holomorphic and anti-holomorphic spheres to harmonic
ones becomes non-trivial due to the non-linear character of the
Euler--Lagrange equations for the energy.

Unfortunately, a direct generalization of Atiyah--Donaldson theorem
to the harmonic case is not possible since the proof of Donaldson
theorem, using the monad method, is purely holomorphic and does not
extend directly to the harmonic case. However, in \cite{bogo100} it
was proposed another way of proving the formulated conjecture.
Namely, one can try to reduce the proof of harmonic spheres
conjecture to the holomorphic case by ''pulling-up'' the both sides
of the correspondence in this conjecture to the associated twistor
spaces.

One of the key points in this approach is an extension of the
twistor construction of harmonic maps, presented in
Sec.~\ref{ssec14}, to the case of infinite-dimensional target
spaces. More precisely, we are interested in the
infinite-dimensional generalization of the construction of harmonic
maps into the complex Grassmann manifold $G_r(\mathbb C^d)$,
presented in Section~\ref{ssec22}.

\bigskip


\section{Harmonic maps into complex Grassmanninans}
\label{sec2}


In this Section we study harmonic maps from the Riemann surfaces
into the complex Grassmann manifolds $G_r(\mathbb C^d)$.


\subsection{Harmonic maps into Riemannian manifolds}
\label{ssec21}


\medskip
Recall first some general properties of harmonic maps of Riemannian
manifolds.

Let $\varphi:(M,g)\to(N,h)$ be a smooth map of a Riemannian manifold
$M$ with Riemannian metric $g$ into a Riemannian manifold $N$ with
Riemannian metric $h$. We define the \textit{energy} of the map
$\varphi$ as the Dirichlet integral
\begin{equation}
\label{en} E(\varphi)=\frac{1}{2}\int_M\vert
d\varphi(p)\vert^2\text{vol}_g\,.
\end{equation}
The norm of the differential may be computed in local coordinates as
follows. Denote by $(x^i)$ local coordinates at $p\in M$ and by
$(u^{\alpha})$ local coordinates at $q=\varphi(p)\in N$. Then
$$
\vert
d\varphi(p)\vert^2=\sum_{i,j}\sum_{\alpha,\beta}g^{ij}\,\frac{\partial
\varphi^{\alpha}}{\partial x^i}\,\frac{\partial\varphi^{\beta}}
{\partial x^j}\,h_{\alpha\beta}
$$
where $\varphi^{\alpha}=\varphi^{\alpha}(x)$ are the components of
$\varphi$, $(g_{ij})$ and $(h_{\alpha\beta})$ are metric tensors of
$M$ and $N$ respectively, and $(g^{ij})$ is the inverse matrix of
$(g_{ij})$, $\text{vol}_g$ is the volume element of metric $g$.

A smooth map $\varphi:M\to N$ is called \textit{harmonic} if it is
extremal for the functional $E(\varphi)$ with respect to all smooth
variations of $\varphi$ with compact supports.

The Euler--Lagrange equation for the energy functional $E(\varphi)$,
called otherwise the \textit{harmonic map equation}, has the
following form in local coordinates $(x^i)$ on $M$ and
$(u^{\alpha})$ on $N$:
\begin{equation}
\label{HM} \Delta_M\varphi^{\gamma} + \sum_{i,j}g^{ij}
\sum_{\alpha,\beta}\,^N\Gamma_{\alpha\beta}^{\gamma}(\varphi)
\frac{\partial\varphi^{\alpha}}{\partial
x_i}\frac{\partial\varphi^{\beta}} {\partial x_j} = 0
\end{equation}
where $\Delta_M$ is the standard Laplace--Beltrami operator on $M$,
given by
$$
\Delta_M\varphi^{\gamma}=
\sum_{i,j}g^{ij}\left\{\frac{\partial^2\varphi^{\gamma}}{\partial
x_i\partial x_j} - \sum_k\,^M\Gamma_{ij}^k
\frac{\partial\varphi^{\gamma}}{\partial x_k}\right\}\;.
$$
Here, ${}^M\Gamma_{ij}^k$ denotes the Christoffel symbol of the
Levi-Civita connection ${}^M\nabla$ of $M$; respectively,
${}^N\Gamma_{\alpha\beta}^{\gamma}$ is the Christoffel symbol of the
Levi-Civita connection ${}^N\nabla$ of $N$.

In particular case $N=\mathbb R^n$ the equation \eqref{HM} becomes
linear and reduces to the Laplace--Beltrami equation
$$
\Delta_M\varphi^{\gamma}=0\ ,\quad\gamma=1,\dots,n
$$
on the components of $\varphi$.

Suppose that the Riemannian manifold $(M,g)$ is provided with a
complex (or almost complex) structure ${}^MJ$, compatible with
Riemannian metric $g$, and, likewise, the target manifold $(N,h)$
has a complex (or almost complex) structure ${}^NJ$, compatible with
Riemannian metric $h$.

Recall that a smooth map $\varphi:M\to N$ is called
\textit{(pseudo)holomorphic} if the tangent map $\varphi_*: TM\to
TN$ commutes with (almost) complex structures on $M$ and $N$, i.e.\
$$
\varphi_*\circ{}^MJ = {}^NJ\circ\varphi_*\;,
$$
and it is called \textit{anti-(pseudo)holomorphic} if $\varphi_*$
anti-commutes with the (almost) complex structures on $M$ and $N$.

The complexified tangent bundle $T^{\mathbb C}M=TM\otimes\mathbb C$
can be decomposed into the direct sum
$$
T^{\mathbb C}M=T^{1,0}M\oplus T^{0,1}M
$$
of subbundles with the fibres, given by the $(\pm i)$-eigenspaces of
the almost complex structure operator ${}^MJ$. If we extend the
tangent map $\varphi_{\ast}$ complex linearly to the complexified
tangent bundles, then we obtain a map $\varphi_{\ast}\colon
T^{\mathbb C}M\to T^{\mathbb C}N$, which, in accordance with the
above decomposition, splits into the sum of four operators
\begin{align}
\label{eq11}
\partial^{\prime}\varphi\colon T^{1,0}M\to T^{1,0}N &\quad,\quad
\partial^{\prime\prime}\varphi\colon T^{0,1}M\to T^{1,0}N\ ,\\
\partial^{\prime}\bar\varphi=\overline{\partial^{\prime\prime}
\varphi}\colon T^{1,0}M\to T^{0,1}N &\quad, \quad
\partial^{\prime\prime}\bar\varphi=\overline{\partial^{\prime}
\varphi}\colon T^{0,1}M\to T^{0,1}N\ .
\end{align}

The introduced operators may be considered as sections of the bundle
$(T^\ast M)^{\mathbb C}\otimes\varphi^{-1}(T^{\mathbb C}N)$. In
these notations the map $\varphi$ is pseudoholomorphic (resp.
anti-pseudo\-holomorphic) if and only if
$\partial^{\prime\prime}\varphi=0$
(resp.$\partial^{\prime}\varphi=0$).

It may be proved (cf. \cite{Ee-Sam}) that for the (almost) K\"ahler
manifolds (pseudo)holo\-morphic and anti-(pseudo)holomorphic maps
$\varphi:M\to N$ always realize local minima of the energy
functional $E(\varphi)$ but, in general, there exist another
critical points of $E(\varphi)$, i.e.\ non-minimal harmonic maps.

We restrict now to the case, when $M$ is a compact Riemann surface.
Denote by $\nabla$ the connection on the bundle
$\varphi^{-1}(T^{\mathbb C}N)$ over $M$, induced by the Levi--Civita
connection ${}^N\nabla$ on the Riemannian manifold $N$. If $z$ is a
local complex coordinate on $M$, we set
$$
\delta\varphi=\varphi_*(\partial/\partial z),\quad
\bar\delta\varphi=\varphi_*(\partial/\partial\bar z)
$$
where $\delta\varphi$ and $\bar\delta\varphi$ are considered as
sections of the bundle $\varphi^{-1}(T^{\mathbb C}N)$. (More
generally, we denote by $\delta=\nabla_{\partial/\partial z}$,
$\bar\delta= \nabla_{\partial/\partial\bar z}$ the components of the
connection $\nabla$.) The differential $d\varphi$ is represented in
the form
$$
d\varphi=dz\otimes\delta\varphi+d\bar z\otimes\bar\delta\varphi,
$$
and harmonic map equation \eqref{HM} may be rewritten in the form
\begin{equation}
\label{harm-eq1} \bar\delta\delta\varphi=
\left(\nabla_{\partial/\partial\bar z}\varphi_*\right)
\left(\frac{\partial}{\partial z}\right)=0\ .
\end{equation}
Equivalent formulation
$$
\delta\bar\delta\varphi= \left(\nabla_{\partial/\partial z}\varphi_*
\right) \left(\frac{\partial}{\partial\bar z}\right)=0\ .
$$

If $N$ is a K\"ahler manifold, then, according to \eqref{eq11},
$$
\delta\varphi=\partial'\varphi+\overline{\partial''\varphi}\ ,\quad
\bar\delta\varphi=\partial''\varphi+\overline{\partial'\varphi}\ ,
$$
and harmonic map equation for $\varphi$ takes on the form 
\begin{equation}
\label{harm-eq2} \bar\delta\partial'\varphi=0\ ,
\end{equation}
or, equivalently, as
$$
\delta\partial''\varphi=0\ .
$$

According to the Koszul--Malgrange theorem (cf. \cite{K-M}), any
complex vector bundle $E$ over a Riemann surface $M$ with a
connection $\nabla$ has a unique complex structure $J$, such that
$E\to M$ is a holomorphic vector bundle with respect to $J$, for
which the $\bar\partial_J$-operator coincides with the
$(0,1)$-component $\nabla^{0,1}$ of the connection $\nabla$. This
complex structure $J$ is called the \textit{KM-structure}.

In its terms, the harmonicity condition \eqref{harm-eq1} means that
$\delta\varphi$ is a holomorphic section of the bundle
$\varphi^{-1}(T^{\mathbb C}N)$ with respect to the KM-structure on
$\varphi^{-1}(T^{\mathbb C}N)$, induced by the connection
${}^N\nabla$. In the same way, the condition \eqref{harm-eq2} means
that $\partial'\varphi$ is a holomorphic section of the bundle
$\varphi^{-1}(T^{1,0}N)$.

\bigskip


\subsection{Complex Grassmann manifolds and flag bundles}
\label{ssec22}

We are going to apply the twistor approach to the construction of
harmonic spheres, presented in Sec.~\ref{ssec14}, to the case when
the target manifold $N$ coincides with the complex Grassmannian
$G_r(\mathbb C^d)$. In this case it is natural to choose for the
twistor bundle over $G_r(\mathbb C^d)$ the bundle of complex
structures on $G_r(\mathbb C^d)$, invariant under the action of
unitary group $\text{U}(d)$. Such bundles coincide with flag bundles
over $G_r(\mathbb C^d)$ which we define next.

We start by recalling the definition of flag manifolds in $\mathbb
C^d$. For that we fix a decomposition of $d$ into the sum of natural
numbers $d=r_1+\dots+r_n$ and denote $\mathbf r:=(r_1,\dots,r_n)$.

The \textit{flag manifold} $F_{\,\mathbf r}(\mathbb C^d)$ in
$\mathbb C^d$ of \textit{type} $\mathbf r=(r_1,\ldots,r_n)$ with
$d=r_1+\ldots+r_n$ consists of flags $\mathcal W=(W_1,\ldots, W_n)$,
i.e. nested sequences of complex subspaces
$$
W_1\subset\ldots\subset W_n=\mathbb C^d,
$$
such that the dimension of the subspace $E_1:=W_1$ is equal to $r_1$
and dimensions of the subspaces $E_i:=W_i\ominus W_{i-1}$ are equal
to $r_i$ for $1<i\leq n$.

In particular, for $\mathbf r=(r,d-r)$ the flag manifold
$$
\mathcal F_{(r,d-r)}(\mathbb C^d) = \{ \mathcal E=(E,E^\perp): \dim
E=r\} =G_r(\mathbb C^d)
$$
coincides with the Grassmann manifold of $r$-dimensional subspaces
in $\mathbb C^d$.

We have the following homogeneous representation of the flag
manifold
$$
\mathcal F_{\,\mathbf r}(\mathbb C^d)=
\left.\text{U}(d)\right/\text{U}(r_1)\times\dots\times\text{U}(r_n)\;.
$$
There is also another, complex homogeneous representation for this
manifold
$$
\mathcal F_{\,\mathbf r}(\mathbb C^d)= \left.\text{GL}(d,\mathbb C)
\right/\mathcal P_{\mathbf r},
$$
where $\mathcal P_{\,\mathbf r}$ is the parabolic subgroup of
blockwise upper-triangular matrices of the form
$$
\begin{pmatrix}
\;\begin{tabular}{c|l} \quad
\raisebox{-0.5mm}{$*$}\vphantom{\bigg|}\quad\,\
\\\hline\end{tabular}\,\,\scriptstyle r_1 &\ast\quad\quad
&\quad\,*&\dots   &\ast
\\[-1.5mm]
\scriptstyle r_1\;\; &&&&\\[-3.2mm]
0\quad            & \hskip-4mm\begin{tabular}{|c|}\hline \;\quad
\raisebox{-0.2mm}{$*$}\vphantom{\Bigg|}\quad\;\,\
\\\hline\end{tabular}\,\,\scriptstyle r_2 &\quad\;\ast
&\quad\dots\quad   &\ast
\\[-1.5mm]
&\scriptstyle r_2\quad\;\; &&&\\
\vdots\quad       &            &\quad\;\ddots  &        &\vdots       \\
&&&&\ \ \scriptstyle r_n \\[-0.5mm]
0\quad            &0\quad\;\;           &\quad\,0       &\dots   &
\scriptstyle r_n\,\begin{tabular}{|c}\hline \quad
\raisebox{-0.2mm}{$*$}\vphantom{\bigg|}\quad\,\ \\\end{tabular}
\end{pmatrix}
$$%

\smallskip
\par\noindent
with blocks of dimensions $r_i\times r_i$ in the boxes.

These representations imply that $\mathcal F_{\,\mathbf r}(\mathbb
C^d)$ has a natural complex structure, which we denote by $J^1$.
Moreover, $\mathcal F_{\,\mathbf r}(\mathbb C^d)$, provided with
this complex structure, is a compact K\"ahler manifold.

In particular case $\mathbf r=(r,d-r)$ we obtain the well known
homogeneous representations for the Grassmann manifold
$$
G_r(\mathbb C^d)=\text{U}(d)/\text{U}(r)\times\text{U}(d-r)=
\text{GL}(d,\mathbb C)/P_{(r,d-r)}\ .
$$

We construct now a series of homogeneous flag bundles over the
Grassmann manifold $G_r(\mathbb C^c)$. Let $\mathcal F=\mathcal
F_{\,\mathbf r}(\mathbb C^N)$ be a flag manifold of type $\mathbf
r=(r_1,\dots,r_n)$ in $\mathbb C^d$ with the homogeneous
representation
$$
\mathcal F=\mathcal F_{\,\mathbf r}(\mathbb C^N)=
\text{U}(d)/\text{U}(r_1)\times\dots\times\text{U}(r_n)\;.
$$
On the Lie algebra level this representation corresponds to the
decomposition of the complexified Lie algebra $\mathfrak u^\mathbb
C(d)$ into the direct orthogonal sum 
\begin{multline}
\mathfrak u^\mathbb C(d)\cong\mathfrak g\mathfrak l(d,\mathbb
C)\cong\overline{\mathbb C^d}\otimes\mathbb C^d\cong \left(\bar
E_1\oplus\dots\oplus\bar E_n\right)\otimes
\left(E_1\oplus\dots\oplus E_n\right)\cong\\
\cong\left[\mathfrak u^\mathbb C(r_1)\oplus\dots\oplus\mathfrak
u^\mathbb C(r_n)\right]\oplus \left[\bigoplus_{i<j}\left(\bar
E_iE_j\oplus\bar E_jE_i\right)\right]\ .
\end{multline}
(In the latter formula we have omitted the sign of the tensor
product in the expression $\bar E_iE_j$ and its conjugate in order
to make the formulas more visible. The same rule will be applied in
the sequel.)

The above decomposition of the Lie algebra $\mathfrak u^\mathbb
C(d)$ implies that the complexified tangent space $T_o^\mathbb
C\mathcal F$ at the origin $o\in\mathcal F$ coincides with
$$
T_o^\mathbb C\mathcal F=\bigoplus\limits_{i<j}\,D_{ij}^\mathbb C:=
\bigoplus\limits_{i<j}\left(\bar E_iE_j\oplus\bar E_jE_i\right)\ .
$$
Every component $D_{ij}$ may be provided with two different complex
structures: for one of them its $(1,0)$-subspace coincides with
$\bar E_iE_j$, for another with $\bar E_jE_i$. By the
Borel--Hirzebruch theorem \cite{Bo-Hir}, any $U(d)$-invariant almost
complex structure $J$ on $\mathcal F$ is determined by the choice of
one of these two complex structures on every $D_{ij}$. The almost
complex structure $J^1$, for which
$$
T_o^{1,0}\mathcal F=\bigoplus\limits_{i<j}\,\bar E_iE_j\;,
$$
is called \textit{canonical}.

Fix an ordered subset $\sigma\subset\{1,\ldots,n\}$ and set
$r:=\sum_{i\in\sigma}r_i$. We can associate with any such subset
$\sigma$ the homogeneous bundle 
\begin{equation}
\label{twi-bun} \pi_{\sigma}\colon\ \mathcal F_{\,\mathbf r}(\mathbb
C^N)= \frac{U(d)}{U(r_1)\times\ldots\times U(r_n)} \longrightarrow
\frac{U(d)}{U(r)\times U(d-r)}=G_r(\mathbb C^d)
\end{equation}
by mapping $(E_1,\ldots,E_n)\longmapsto
E=\bigoplus_{i\in\sigma}E_i$.

The complexified tangent bundle $T^{\mathbb C}\mathcal F_{\,\mathbf
r}(\mathbb C^N)$ is decomposed into the direct sum of vertical and
horizontal subbundles. Namely, the vertical subspace at the origin
coincides with $\bigoplus\limits_{i,j}\,D^\mathbb C_{ij}$, where
$i<j$ and either $i,j\in\sigma$, or $i,j\notin\sigma$. Respectively,
the horizontal subspace at the origin is equal to
$\bigoplus\limits_{i,j}D^\mathbb C_{ij}$, where $i<j$ and either
$i\in\sigma,j\notin\sigma$, or $i\notin\sigma,j\in\sigma$.

We introduce, along with the canonical complex structure $\mathcal
J^1$, a new $\text{U}(d)$-invariant almost complex structure
$\mathcal J^2_\sigma$ on $\mathcal F_{\,\mathbf r}(\mathbb C^N)$, by
setting it equal to $\mathcal J^2_\sigma=\mathcal J^1$ on horizontal
tangent vectors and $\mathcal J^2_\sigma=-\mathcal J^1$ on vertical
tangent vectors.

With this almost complex structure on $\mathcal F_{\,\mathbf
r}(\mathbb C^N)$ we have the following analogue of the
Eells--Salamon theorem from Sec.~\ref{ssec14}.

\begin{theorem}[Burstall-Salamon]
\label{th111} The flag bundle
$$
\pi_\sigma: (F_{\,\mathbf r}(\mathbb C^d),\mathcal
J^2_{\sigma})\longrightarrow G_r(\mathbb C^d),
$$
provided with the almost complex structure $\mathcal J^2_{\sigma}$,
is a twistor bundle, i.e. the projection
$\varphi=\pi_\sigma\circ\psi$ of any holomorphic map $\psi:M\to
F_{\,\mathbf r}(\mathbb C^d)$ from the Riemann surface $M$ to
$G_r(\mathbb C^d)$ is a harmonic map $\varphi:M \to G_r(\mathbb
C^d)$. In the case when $M=\mathbb P^1$ is the Riemann sphere, the
converse of this assertion is also true: any harmonic sphere
$\varphi:\mathbb P^1\to G_r(\mathbb C^d)$ in $G_r(\mathbb C^d)$ may
be obtained as the projection of a holomorphic sphere in some flag
bundle $\pi_\sigma: F_{\,\mathbf r}(\mathbb C^d)\to G_r(\mathbb
C^d)$.
\end{theorem}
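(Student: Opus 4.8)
The plan is to prove the two implications separately, working throughout in the \emph{Grassmannian model}: a smooth map $\varphi:M\to G_r(\mathbb C^d)$ is the same thing as a rank-$r$ subbundle $\underline\varphi\subset\underline{\mathbb C}^d:=M\times\mathbb C^d$, and the trivial connection $d$ on $\underline{\mathbb C}^d$ splits, relative to $\underline{\mathbb C}^d=\underline\varphi\oplus\underline\varphi^{\perp}$, as $d=\nabla\oplus\nabla^{\perp}+A$, the off-diagonal part $A=A_{\underline\varphi}+A_{\underline\varphi^{\perp}}$ being the pair of second fundamental forms; with $A'=A^{1,0}$, $A''=A^{0,1}$, the harmonic map equation \eqref{harm-eq2} for $\varphi$ is equivalent to the assertion that $A'_{\underline\varphi}$ is a holomorphic section of $\mathrm{Hom}(\underline\varphi,\underline\varphi^{\perp})$ for the Koszul--Malgrange structures (cf.\ \cite{K-M}) induced on $\underline\varphi$ and $\underline\varphi^{\perp}$ by $\nabla$ and $\nabla^{\perp}$. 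For a flag $(\underline E_1,\dots,\underline E_n)$ over $M$ the same decomposition of $d$ yields induced connections $\nabla_i$ on the $\underline E_i$ and block second fundamental forms $A_{ij}\in\Omega^1(\mathrm{Hom}(\underline E_i,\underline E_j))$; reading off from the description of $T^{\mathbb C}\mathcal F$ and of $\mathcal J^1,\mathcal J^2_\sigma$ recalled above which blocks of $A'_{ij},A''_{ij}$ must vanish and which must be Koszul--Malgrange-holomorphic, one obtains a dictionary: $\psi:M\to(\mathcal F,\mathcal J^2_\sigma)$ is holomorphic iff on each $\pi_\sigma$-horizontal pair of factors its derivative has only $\mathcal J^1$-type components (which are then holomorphic) and on each $\pi_\sigma$-vertical pair only anti-$\mathcal J^1$-type components.

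For the \emph{sufficiency} direction, given $\psi:M\to(\mathcal F,\mathcal J^2_\sigma)$ holomorphic, I would compute the second fundamental form of $\underline\varphi:=\bigoplus_{i\in\sigma}\underline E_i$. Its off-diagonal part $A'_{\underline\varphi}=\sum_{i\in\sigma,\,j\notin\sigma}A'_{ij}$ is supported on $\pi_\sigma$-horizontal pairs, where $\mathcal J^2_\sigma=\mathcal J^1$; by the dictionary each surviving block $A'_{ij}$ is Koszul--Malgrange-holomorphic, and the blocks within $\sigma$ (respectively within the complement of $\sigma$) that govern the Koszul--Malgrange structure on $\underline\varphi$ (respectively on $\underline\varphi^{\perp}$) are triangular, so a short chase along the connection shows that $A'_{\underline\varphi}$ is holomorphic as a map $\underline\varphi\to\underline\varphi^{\perp}$; by the criterion above, $\varphi=\pi_\sigma\circ\psi$ is harmonic. (Equivalently, one may exhibit the $\mathrm U(d)$-equivariant map of $F_{\,\mathbf r}(\mathbb C^d)$ into the --- suitably oriented --- Eells--Salamon twistor space of $G_r(\mathbb C^d)$ sending a flag to the compatible complex structure it induces on $T G_r(\mathbb C^d)$, check that it intertwines $\mathcal J^2_\sigma$ with $\mathcal J^2$ in horizontal and vertical directions, and invoke Theorem \ref{ee-sal}; this is the Kähler-target instance of the Eells--Salamon argument.)

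For the \emph{converse}, with $M=\mathbb P^1$, I would build the lift from the harmonic sequence of $\varphi$. By the theory of Gauss transforms of harmonic maps into Grassmannians (Chern--Wolfson, Burstall--Wood), the $\partial'$-transform $G'(\varphi)$ --- the subbundle obtained from the image of $A'_{\underline\varphi}$ after saturating at its (isolated, since $A'_{\underline\varphi}$ is holomorphic) zeros --- is again harmonic, likewise the $\partial''$-transform, and the two invert one another when nonzero. Because $\mathbb P^1$ has genus zero, iterating both transforms terminates, giving a finite harmonic sequence $\underline\varphi_a,\dots,\underline\varphi_b$ with $\underline\varphi_{i+1}=G'(\underline\varphi_i)$ and $\underline\varphi_p=\underline\varphi$, where $\underline\varphi_a$ is holomorphic and $\underline\varphi_b$ anti-holomorphic; the $\underline\varphi_i$ are mutually orthogonal, $d$ sends sections of $\underline\varphi_i$ into sections of $\underline\varphi_{i-1}\oplus\underline\varphi_i\oplus\underline\varphi_{i+1}$ (the $(1,0)$-part reaching only $\underline\varphi_i\oplus\underline\varphi_{i+1}$, the $(0,1)$-part only $\underline\varphi_{i-1}\oplus\underline\varphi_i$), and $\bigoplus_i\underline\varphi_i$ is a flat, hence constant, subbundle $M\times V$. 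Restricting to $V$ (and re-inserting $V^{\perp}$ as a harmless extra flag factor at the end) I may assume $\bigoplus_i\underline\varphi_i=\underline{\mathbb C}^d$. I then form $F_{\,\mathbf r}(\mathbb C^d)$ with factors the $\underline\varphi_i$, ordered not linearly but by placing $\underline\varphi_p$ in a middle slot $s_0$ and the lower block $\underline\varphi_{p-1},\dots,\underline\varphi_a$ and upper block $\underline\varphi_b,\dots,\underline\varphi_{p+1}$ each in reversed order to the two sides of $s_0$, and set $\sigma=\{s_0\}$, so that $\pi_\sigma\circ\psi=\underline\varphi_p=\varphi$ for $\psi=(\underline\varphi_i)$. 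Using the banded structure together with the vanishing of the $(1,0)$-derivative on $\underline\varphi_b$ and of the $(0,1)$-derivative on $\underline\varphi_a$, one checks that every nonzero derivative component of $\psi$ lands on exactly the side of $\mathcal J^1$ that $\mathcal J^2_\sigma$ requires --- which is precisely what the reversal of the two blocks is designed to secure --- so $\psi$ is $\mathcal J^2_\sigma$-holomorphic, and $\varphi=\pi_\sigma\circ\psi$.

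The two non-formal ingredients are both in the converse. The first is the Gauss-transform input: that $G'(\varphi)$ is harmonic, the banded structure, and above all the \emph{finiteness} of the harmonic sequence --- this is exactly where genus zero of $\mathbb P^1$ is used, and it fails over positive-genus surfaces. The second is the combinatorial bookkeeping needed to order the flag factors (a tournament, in the language of Burstall--Salamon) so that the lift is holomorphic for $\mathcal J^2_\sigma$ rather than only for the canonical structure $\mathcal J^1$. I expect the finiteness of the harmonic sequence to be the genuine heart of the matter; granted that, what remains is careful linear algebra along the connection.
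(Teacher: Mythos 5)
Your sufficiency direction is sound and is essentially the argument the paper itself gives (for the infinite-dimensional analogue, Theorem~\ref{direct}; Theorem~\ref{th111} itself is only quoted from Burstall--Salamon): the flatness of the trivial connection on $M\times\mathbb C^d$, written blockwise as $0=\sum_i\bigl(A''_{ti}A'_{is}-A'_{ti}A''_{is}\bigr)$ for $s\in\sigma$, $t\notin\sigma$, combined with the vanishing of exactly those blocks that $\mathcal J^2_\sigma$-holomorphicity of $\psi$ forces to be zero, collapses to $A'_E\circ\partial''_E=\partial''_{E^\perp}\circ A'_E$, i.e.\ to the Koszul--Malgrange holomorphicity of $A'_E$ and hence to the harmonicity of $\varphi=\pi_\sigma\circ\psi$. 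Your parenthetical equivariant reduction to Theorem~\ref{ee-sal} is also a legitimate alternative.

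The gap is in your converse. You construct the moving flag from the harmonic sequence of iterated Gauss transforms and assert that the $\underline\varphi_i$ are mutually orthogonal, that $G'$ and $G''$ invert one another when nonzero, and that $\bigoplus_i\underline\varphi_i$ is a flat (constant) subbundle. All three assertions are special to $r=1$, i.e.\ to harmonic spheres in $\mathbb P^{d-1}$. For $r>1$ the bundle map $A'_{\underline\varphi}$ may have nontrivial kernel, so the rank of $G'(\varphi)$ can drop and $G''(G'(\varphi))$ is in general only a proper subbundle of $\underline\varphi$; moreover the iterated Gauss bundles need not be mutually orthogonal (already $G'(\varphi)$ and $G''(\varphi)$ can fail to be orthogonal to each other), so the construction does not produce an orthogonal decomposition of $\underline{\mathbb C}^d$ and there is no flag to lift to. This is exactly the obstruction that makes the Grassmannian case (Burstall--Wood, Wood) so much harder than the projective-space case, and it is why Burstall--Salamon --- and the paper, in the proof of Theorem~\ref{converse} --- proceed differently: equip $E=\underline\varphi$ and $E^\perp$ separately with their Koszul--Malgrange holomorphic structures, apply Birkhoff--Grothendieck in Harder--Narasimhan form to each to obtain filtrations $0=\mathcal B_0\subset\cdots\subset\mathcal B_s=E$ and $0=\mathcal C_0\subset\cdots\subset\mathcal C_t=E^\perp$ with quotients $b_iL^{\beta_i}$ and $c_jL^{\gamma_j}$, take the orthogonal complements $B_i$, $C_j$ as the flag factors ordered by degree, and deduce the required vanishing of the off-diagonal blocks from the holomorphicity of $A'_E$ together with the degree bounds on the meromorphic sections generating the $\mathcal B_i$. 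Genus zero enters only through Birkhoff--Grothendieck, not through the termination of a Gauss-transform iteration; you have located correctly where $\mathbb P^1$ is essential, but the mechanism you propose for exploiting it does not survive the passage from $r=1$ to general $r$.
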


In Section~\ref{ssec32} we shall generalize this theorem to harmonic
maps with values in Hilbert--Schmidt Grassmannians.


\section{Harmonic maps into Hilbert--Schmidt Grassmanninans}
\label{sec3}


In this Section we prove our main result -- an infinite-dimensional
generalization of the Burstall--Salamon theorem from
Section~\ref{ssec22}.


\subsection{Hilbert--Schmidt Grassmannians}
\label{ssec31}


\medskip

Consider a complex Hilbert space $H$, provided with a
\textit{polarization}. It means that $H$ is decomposed
\begin{equation}
\label{pol}
H=H_+\oplus H_-
\end{equation}
into the direct orthogonal sum of closed infinite-dimensional
subspaces $H_\pm$. For example, in the case of the Hilbert space
$H=L^2_0(S^1,\mathbb C)$ of square integrable functions on $S^1$
with zero average one can take for such subspaces
$$
H_\pm=\{\gamma\in H: \gamma=\sum_{\pm k>0}\gamma_ke^{ik\theta}\}.
$$

Any bounded linear operator $A\in L(H)$ with respect to the
polarization \eqref{pol} can be written in the block form
$$
A= \begin{pmatrix} a & b\\ c & d
\end{pmatrix} =
\begin{pmatrix}
a: H_+\to H_+\ , & b: H_-\to H_+ \\ c: H_+\to H_-\ , & d: H_-\to H_-
\end{pmatrix}\;.
$$

Denote by $\text{GL}(H)$ the group of linear bounded operators in
$H$, having a bounded inverse, and introduce the
\textit{Hilbert--Schmidt group} $\text{GL}_{\text{HS}}(H)$,
consisting of operators $A\in\text{GL}(H)$, for which the
"off-diagonal" terms $b$ and $c$ are Hilbert--Schmidt operators. In
other words, the group $\text{GL}_{\text{HS}}(H)$ consists of
operators $A\in\text{GL}(H)$, for which the "off-diagonal" terms $b$
and $c$ are "small" with respect to the "diagonal" terms $a$ and
$d$. We denote also by $\text{U}_{\text{HS}}(H)$ the intersection of
$\text{GL}_{\text{HS}}(H)$ with the group $\text{U}(H)$ of unitary
operators in $H$.

As in finite-dimensional situation, there is a Grassmann manifold
$\text{Gr}_{\text{HS}}(H)$, called the Hilbert--Schmidt
Grassmannian, related to the group $\text{GL}_{\text{HS}}(H)$.

The \textit{Hilbert--Schmidt Grassmannian}
$\text{Gr}_{\text{HS}}(H)$ consists of all closed subspaces
$W\subset H$ such that the orthogonal projection $\text{pr}_+:W\to
H_+$ is a Fredholm operator, while the orthogonal projection
$\text{pr}_-:W\to H_-$ is a Hilbert--Schmidt operator. Equivalently:
a subspace $W\in\text{Gr}_{\text{HS}}(H)$ iff it coincides with the
image of a linear operator $w: H_+\to H$ such that
$w_+:=\text{pr}_+\circ w$ is a Fredholm operator, and
$w_-:=\text{pr}_-\circ w$ is a Hilbert--Schmidt operator.

In other words, $\text{Gr}_{\text{HS}}(H)$ consists of the subspaces
$W\subset H$, which differ "little" from the subspace $H_+$ in the
sense that projection $\text{pr}_+:W\to H_+$ is "close" to an
isomorphism and projection $\text{pr}_-:W\to H_-$ is "small".

We have the following homogeneous space representation of
$\text{Gr}_{\text{HS}}(H)$:
$$
\text{Gr}_{\text{HS}}(H)=\left.\text{U}_{\text{HS}}(H)\right/
\text{U}(H_+)\times\text{U}(H_-).
$$

Since $\text{U}_{\text{HS}}(H)$ acts transitively on the
Grassmannian $\text{Gr}_{\text{HS}}(H)$, we can construct an
$\text{U}_{\text{HS}}(H)$-invariant K\"ahler metric on
$\text{Gr}_{\text{HS}}(H)$ from an inner product on the tangent
space $T_{H_+}\text{Gr}_{\text{HS}}(H)$ at the origin
$H_+\in\text{Gr}_{\text{HS}}(H)$, invariant under the action of the
isotropy subgroup $\text{U}(H_+)\times\text{U}(H_-)$. The tangent
space $T_{H_+}\text{Gr}_{\text{HS}}(H)$ coincides with the space of
Hilbert--Schmidt operators $\text{HS}(H_+,H_-)$ and invariant inner
product on it is given by the formula
$$
(A,B)\longmapsto\text{Re}\left\{\text{tr}(AB^\dagger)\right\},\quad
A,B\in\text{HS}(H_+,H_-)\;.
$$
Note that the imaginary part of the complex inner product
$\text{tr}(AB^\dagger)$ determines a non-degenerate invariant
$2$-form on $T_{H_+}\text{Gr}_{\text{HS}}(H)$, which extends to an
$\text{U}_{\text{HS}}(H)$-invariant symplectic structure on the
manifold $\text{Gr}_{\text{HS}}(H)$. Hence, we have a K\"ahler
structure on $\text{Gr}_{\text{HS}}(H)$, which makes it a K\"ahler
Hilbert manifold.

There is an evident obstacle to the immediate extension of the
results, obtained in Section~\ref{ssec22} for finite-dimensional
Grassmannians, to $\text{Gr}_{\text{HS}}(H)$. Namely, all subspaces
$W\in\text{Gr}_{\text{HS}}(H)$ are infinite-dimensional, i.e. in
some sense have the same infinite "dimension" and the problem is how
to compare them. Fortunately, there is a replacement of the notion
of dimension in Hilbert--Schmidt Grassmannian, called the ''virtual
dimension'', which can be used for the comparison of different
subspaces.

In more detail, $\text{Gr}_{\text{HS}}(H)$ has a countable number of
connected components, numerated by the index of the Fredholm
operator $w_+$ for a subspace $W\in\text{Gr}_{\text{HS}}(H)$. We say
that $W$ has the \textit{virtual dimension} $r$ if the index of
$w_+$ is equal to $r$. Denote by $\text{G}_r(H)$ the component of
$\text{Gr}_{\text{HS}}(H)$, consisting of subspaces $W$ of virtual
dimension $r$.

Then we have the following decomposition of
$\text{Gr}_{\text{HS}}(H)$ into the disjoint union of its connected
components $\text{G}_r(H)$:
\begin{equation}
\label{union} \text{Gr}_{\text{HS}}(H)=\bigcup_r\,\text{G}_r(H)\;.
\end{equation}
\smallskip
\par\noindent Due to this decomposition, the study of harmonic maps
of Riemann surfaces into $\text{Gr}_{\text{HS}}(H)$ is reduced to
the study of harmonic maps into Grassmannians $\text{G}_r(H)$ of
virtual dimension $r$, which may be carried on along the same lines,
as in the case of Grassmann manifold $G_r(\mathbb C^d)$.


\subsection{Harmonic maps into Hilbert--Schmidt Grassmannians}
\label{ssec32}


Let $M$ be a Riemann surface. Denote by $M\times H$ the trivial
bundle $M\times H\to M$ where $H$ is a complex Hilbert space
provided with the polarization \eqref{pol}. We shall consider the
subbundles $E\subset M\times H$, having the fibres
$E_p\in\text{Gr}_{\text{HS}}(H)$ for $p\in M$. Any bundle $E$ of
this type defines a map
$\varphi_E:M\longrightarrow\text{Gr}_{\text{HS}}(H)$ by setting:
$\varphi_E(p):=\text{the fibre $E_p$ at $p\in M$}$. Conversely, any
map $\varphi: M\to\text{Gr}_{\text{HS}}(H)$ defines a subbundle
$E\subset M\times\text{Gr}_{\text{HS}}(H)$.

Consider a smooth map of a Riemann surface $M$ into Grassmannian
$\text{Gr}_{\text{HS}}(H)$. Denote by $\pi$ and $\pi^\perp$ the
orthogonal projections of $M\times H$ onto the subbundle $E$ and its
orthogonal complement $E^\perp$ respectively. The bundle $E$ is
provided with the complex KM-structure which is determined in a
local chart on $M$ with local coordinate $z$ by the
$\bar\partial$-operator
$$
\partial''_E=\pi\circ\frac{\partial}{\partial\bar z}\circ\pi.
$$
The inverse image $\varphi_E^{-1}(T^\mathbb
C\text{Gr}_{\text{HS}}(H))$ of the complexified tangent bundle of
$\text{Gr}_{\text{HS}}(H)$ under the map $\varphi_E$ admits the
decomposition
$$
\varphi_E^{-1}(T^\mathbb C\text{Gr}_{\text{HS}}(H))\cong
\overline{E}E^\perp\oplus\overline{E^\perp}E\;.
$$
In terms of this decomposition the differential of $\varphi_E$ has
local components
$$
A'_E:=\pi^\perp\circ\frac{\partial}{\partial z}\circ\pi\ ,\quad
A''_E:=\pi^\perp\circ\frac{\partial}{\partial\bar z}\circ\pi\;.
$$
(In the sequel we sometimes omit the sign $\circ$ to simplify the
formulas.) In particular, the bundle $E\subset M\times H$ is
holomorphic $\iff$ $A''_E=0$, and in this case the KM-structure on
$E$ coincides with the complex structure, induced from $M\times H$.
Then 
\begin{multline*} 0=\pi^\perp\left[\frac{\partial}{\partial
z}(\pi+\pi^\perp)\frac{\partial}{\partial\bar z} -
\frac{\partial}{\partial\bar z}(\pi+\pi^\perp)\frac{\partial}
{\partial z}\right]\pi= \\
=A'_E\partial''_E+\partial'_{E^\perp}A''_E-A''_E\partial'_E-
\partial''_{E^\perp}A'_E=A'_E\partial''_E-\partial''_{E^\perp}A'_E\;.
\end{multline*}
In other words, $A'_E\in\text{Hom}(E,E^\perp)$ is holomorphic with
respect to the KM-structures on $E$ and $E^\perp$.

In general, we call a bundle $E\subset M\times H$ \textit{harmonic}
if
$$
A'_E\circ\partial''_E=\partial''_{E^\perp}\circ A'_E\;.
$$
The harmonicity of $E$ is equivalent to the harmonicity of the map
$\varphi_E:  M\to G_r(H)$ (cf. \cite{Bur-Wood}). Note also that $E$
is harmonic $\iff$ its orthogonal complement $E^\perp$ is harmonic.

Now we generalize this situation to the maps into
infinite-dimensional flag manifolds.


\subsection{Holomorphic maps into Hilbert--Schmidt flag manifolds}
\label{ssec33}


Introduce first the Hilbert--Schmidt flag manifolds. For that fix an
$n$-tuple $\mathbf r=(r_1,\ldots,r_n)$ of integers. The
\textit{Hilbert--Schmidt flag manifold} $F_{\mathbf r}(H)$ consists
of the flags of the form
$$
\mathcal E\equiv (E_1,\ldots E_n)
$$
where $E_k\equiv W_{\text{in}}$, $E_l\equiv W_{\text{out}}$ are
closed infinite-dimensional subspaces in $H$ and
$$
E_1,\ldots,E_{k-1},E_{k+1},\ldots,E_{l-1},E_{l+1},\ldots,E_n
$$
are finite-dimensional subspaces having the following properties:
\begin{enumerate}
\item the projection $\text{pr}_+:W_{\text{in}}\to H_+$ is a Fredholm
operator of index $r_k$ while the projection
$\text{pr}_-:W_{\text{in}}\to H_-$ is a Hilbert--Schmidt operator;
\item the projection $\text{pr}_-:W_{\text{out}}\to H_-$ is a Fredholm
operator of index $r_l$ while the projection
$\text{pr}_+:W_{\text{out}}\to H_+$ is a Hilbert--Schmidt operator;
\item $E_i$ with $i=1,\ldots,k-1,k+1,\ldots,l-1,l+1,\ldots,n$ are
$r_i$-dimensional vector subspaces in $H$;
\item all subspaces $E_i$ with $i=1,\ldots,n$ are
pairwise orthogonal and their direct sum is equal to $H$:
$E_1\oplus\ldots\oplus E_n=H$.
\end{enumerate}

To simplify the notation, we say that $\mathcal E=(E_1,\ldots E_n)$
is a virtual flag of virtual dimension $\mathbf r=(r_1,\ldots,r_n)$
having in mind that $r_k$ (resp. $r_l$) are integers, equal to the
virtual dimension of $E_k= W_{\text{in}}$ (resp.
$E_l=W_{\text{out}}$), while other $r_i$'s are positive integers,
equal to the dimensions of $E_i$'s for $i\neq k,l$.

The tangent space to $F_{\mathbf r}(H)$ at the origin, as in the
finite-dimensional case, is the direct sum of four different terms
\begin{multline*}
\label{decomp}
 T^{\mathbb{C}}(F_{\mathbf r}(H)) \cong
\bigoplus_{1\leq i,j\neq k,l,i<j\leq n}\left[\overline{E}_{i}E_{j}
\oplus\overline{E}_{j}E_{i}\right] \oplus\bigoplus_{1\leq i\leq
n,i\neq k}\left[\overline{W}_{\text{in}}E_{i}
\oplus\overline{E}_{i}W_{\text{in}}\right]\\ \oplus \bigoplus_{1\leq
i\leq n, i\neq l}\left[\overline{W}_{\text{out}}E_{i}
\oplus\overline{E}_{i}W_{\text{out}}\right]
\oplus\left[\overline{W}_{\text{in}}
W_{\text{out}}\oplus\overline{W}_{\text{out}}W_{\text{in}}\right].
\end{multline*}

The tangent space to $\text{Gr}_{\text{HS}}(H))$ looks the same if
we set $E_{i}=0$ for all $E_i$'s except for $i=k,l$.

Generalizing the situation, studied in Sec.~\ref{ssec32}, let us
consider an arbitrary collection $\mathcal E=(E_1,\ldots,E_n)$ of
mutually orthogonal subbundles $E_i$ in $M\times H$ of virtual
dimension $\mathbf r=(r_1,\ldots,r_n)$, generating the decomposition
of $M\times H$ into the direct orthogonal sum
$$
M\times H = \bigoplus\limits_{i=1}^n E_i.
$$
We call such a collection of subbundles $\mathcal
E=(E_1,\ldots,E_n)$ the \textit{moving flag} on $M$. It determines,
in the same way as before, a map $\psi_{\mathcal E}: M\to F_{\mathbf
r}(H)\equiv\mathcal F$ by assigning to a point $p\in M$ the flag,
defined by the subspaces $(E_{1,p},\ldots,E_{n,p})$. Conversely, any
smooth map $\psi\colon M\to\mathcal F$ determines a moving flag
$\mathcal E=(E_1,\ldots,E_n)$, where $E_i=\psi^{-1}T_i$ is the
pull-back of a natural tautological bundle $T_i\to F_{\mathbf
r}(H)$: the fibre of $T_i$ at $\mathcal E\in\mathcal F$ coincides,
by definition, with the subspace $E_i$ for $1\leq i\leq n$.

As in the Grassmann case, the differential $\psi_{\mathcal E}$ is
determined locally by the components
$$
A'_{ij}=\pi_i\circ\frac{\partial}{\partial z}\circ\pi_j\ , \quad
A''_{ij}=\pi_i\circ\frac{\partial}{\partial\bar z}\circ\pi_j\;,
$$
where $\pi_i\colon\, M\times H\to E_i$ is the orthogonal projection.
Note that by construction $A''_{ij}=-(A'_{ji})^*$.

Each of the subbundles $E_i$ of the trivial bundle $M\times H$ is
provided with the KM-structure which coincide with the complex
structure, induced from $M\times H$. Also the components $A'_{ij}$,
$A''_{ij}$ satisfy the harmonicity and holomorphicity conditions
similar to those in Sec.~\ref{ssec32}.

We introduce now an almost complex structure on the Hilbert--Schmidt
flag bundle $F_{\mathbf r}(H)$, analogous to the almost complex
structure $\mathcal J^2_\sigma$ from Sec.~\ref{ssec22}. As in the
Grassmannian case, an almost complex structure on $F_{\mathbf r}(H)$
is fixed by choosing the $(1,0)$-component in each of the summands
of
\begin{equation}
\label{flag_decomp} T^{\mathbb{C}}(F_{\mathbf
r}(H))\cong\bigoplus_{1\leq i<j\leq n}\left[\overline{E}_{i}E_{j}
\oplus\overline{E}_{j}E_{i}\right].
\end{equation}
(We recall that \eqref{flag_decomp} is just a concise form of
\eqref{decomp}.)

To define the almost complex structure $\mathcal J^2_\sigma$, we fix
an ordered subset $\sigma\subset\{1,\ldots,n\}$. Then for the
associated almost complex structure $\mathcal J^2_\sigma$ we choose
for $i,j\in\{1,\ldots,n\}$, $i<j$, the $(1,0)$-component in
$(i,j)$-summand in \eqref{flag_decomp}, equal to $\overline{E}_jE_i$
if $i,j\in\sigma$ or $i,j\notin\sigma$, and to $\overline{E}_iE_j$
if $i\in\sigma$, $j\notin\sigma$ or $i\notin\sigma$, $j\in\sigma$.


\subsection{Twistor bundle over the Hilbert--Schmidt Grassmannian}
\label{ssec34}


We construct now the Hilbert--Schmidt flag bundle over the
Grassmannian. Suppose that $\sigma$ is a fixed ordered subset in
$\{1,\ldots,n\}$ and set $r=\sum_{i\in\sigma}r_i$. Then we define
the \textit{Hilbert--Schmidt flag bundle}
$$
\pi_\sigma:F_{\mathbf r}(H)\longrightarrow G_r(H)
$$
by mapping
$$
\mathcal E=(E_1,\ldots,E_n)\longmapsto E:=\bigoplus_{i\in\sigma}E_i.
$$

With this definition we can prove the following

\begin{theorem}
\label{direct} Let $\sigma$ be an ordered subset in $\{1,\ldots,n\}$
such that $k\in\sigma$, $l\notin\sigma$. Then the map $\pi_\sigma$
of the Hilbert--Schmidt flag manifold $F_{\mathbf r}(H)$, provided
with the almost complex structure $\mathcal J^2_\sigma$ to $G_r(H)$:
$$
\pi_\sigma:F_{\mathbf r}(H)\longrightarrow G_r(H)
$$
is a twistor bundle. It means that for any $\mathcal
J^2_\sigma$-holomorphic map $\psi: M\to F_{\mathbf r}(H)$ its
projection $\varphi=\pi_\sigma\circ\psi: M\to G_r(H)$ is harmonic.
\end{theorem}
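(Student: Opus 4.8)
The plan is to mimic the finite-dimensional Burstall–Salamon argument at the level of the moving-flag data, reducing everything to bundle computations with the operators $A'_{ij}, A''_{ij}$ introduced in Sec.~\ref{ssec33}. First I would fix a $\mathcal J^2_\sigma$-holomorphic map $\psi\colon M\to F_{\mathbf r}(H)$ and translate the holomorphicity condition into vanishing of the appropriate off-diagonal components of $d\psi$. Concretely, $\mathcal J^2_\sigma$-holomorphicity of $\psi$ says that $\partial''\psi$ lands in the $(1,0)$-part of $T^{\mathbb C}F_{\mathbf r}(H)$ chosen by $\sigma$; unwinding the definition of $\mathcal J^2_\sigma$ from the end of Sec.~\ref{ssec33}, this means exactly that $A''_{ij}=0$ whenever the $(i,j)$-summand has its $\mathcal J^2_\sigma$-$(1,0)$-part equal to $\overline E_i E_j$, i.e.\ $A''_{ij}=0$ for $i\in\sigma, j\notin\sigma$ and $i\notin\sigma,j\in\sigma$ with $i<j$ appropriately, and similarly the ``vertical'' summands ($i,j\in\sigma$ or $i,j\notin\sigma$) are constrained in the opposite direction. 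The key point I want to extract is: for the horizontal (mixed $\sigma$/non-$\sigma$) summands the relevant $\bar\partial$-type component vanishes, while for the vertical summands the relevant $\partial$-type component vanishes.

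Next I would compute $\varphi=\pi_\sigma\circ\psi$ in terms of this data. Writing $\pi=\sum_{i\in\sigma}\pi_i$ for the projection onto $E=\bigoplus_{i\in\sigma}E_i$ and $\pi^\perp=\sum_{i\notin\sigma}\pi_i$, the operator $A'_E=\pi^\perp\,\partial_z\,\pi$ decomposes as $A'_E=\sum_{i\notin\sigma,\,j\in\sigma}A'_{ij}$ and likewise $A''_E=\sum_{i\notin\sigma,\,j\in\sigma}A''_{ij}$, and the KM-operators satisfy $\partial''_E=\sum_{i,j\in\sigma}\pi_i\partial_{\bar z}\pi_j$, $\partial''_{E^\perp}=\sum_{i,j\notin\sigma}\pi_i\partial_{\bar z}\pi_j$. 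The harmonicity condition to verify is $A'_E\,\partial''_E=\partial''_{E^\perp}\,A'_E$, which by Sec.~\ref{ssec32} is equivalent to harmonicity of $\varphi$. The plan is to expand both sides over the index decomposition and show term-by-term equality using (a) the $\mathcal J^2_\sigma$-holomorphicity relations just derived, (b) the integrability-type identities coming from $\pi_i\pi_j=\delta_{ij}\pi_i$ and $\sum_i\pi_i=\mathrm{Id}$, which yield ``moving-frame'' relations among $A'_{ij}, A''_{ij}$ and the partial KM-operators (these are precisely the infinite-dimensional analogues of the structure equations already sketched in the display in Sec.~\ref{ssec32}, now relating $A'_E\partial''_E$ to $\partial''_{E^\perp}A'_E$ via the third subbundles $E_m$ with $m\ne k,l$), and (c) the relation $A''_{ij}=-(A'_{ji})^*$ noted in Sec.~\ref{ssec33}.

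The role of the hypotheses $k\in\sigma$ and $l\notin\sigma$ is to guarantee that the two infinite-dimensional members of the flag, $W_{\mathrm{in}}=E_k$ and $W_{\mathrm{out}}=E_l$, land on opposite sides of the splitting $E=W_{\mathrm{in}}\oplus(\text{finite})$ versus $E^\perp$; this ensures that $\mathrm{pr}_+\colon E\to H_+$ is Fredholm of index $r=\sum_{i\in\sigma}r_i$ and $\mathrm{pr}_-\colon E\to H_-$ is Hilbert–Schmidt, so that $E_p\in G_r(H)$ genuinely and $\pi_\sigma$ is well defined as a map into $G_r(H)$ rather than into some other component of $\mathrm{Gr}_{\mathrm{HS}}(H)$. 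I would check this first, before the harmonicity computation, since it is what makes the statement meaningful. I expect the main obstacle to be purely analytic bookkeeping rather than conceptual: one must check that every operator appearing in the expansion (products of projections, derivatives, and the off-diagonal $A$-operators) stays within the trace-class/Hilbert–Schmidt regime so that the formal manipulations — in particular the ``integration by parts'' identity $\pi^\perp[\partial_z(\pi+\pi^\perp)\partial_{\bar z}-\partial_{\bar z}(\pi+\pi^\perp)\partial_z]\pi$ and its flag refinement — are legitimate; here the Hilbert–Schmidt constraints built into $\mathrm{Gr}_{\mathrm{HS}}(H)$ and $F_{\mathbf r}(H)$ do exactly the work that finite-dimensionality did in the Burstall–Salamon setting, and the finite-dimensional subbundles $E_i$ ($i\ne k,l$) contribute only bounded finite-rank corrections. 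Once these estimates are in place, the term-by-term cancellation is the same as in Sec.~\ref{ssec22}, and harmonicity of $\varphi$ follows from the criterion of Sec.~\ref{ssec32}.
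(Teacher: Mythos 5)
Your plan follows the paper's own proof essentially verbatim: translate $\mathcal J^2_\sigma$-holomorphicity of $\psi$ into the vanishing of the appropriate components $A'_{ij}$, $A''_{ij}$, then expand the commutator identity $\pi_t\sum_i\bigl[\partial_{\bar z}\pi_i\partial_z-\partial_z\pi_i\partial_{\bar z}\bigr]\pi_s=0$ blockwise (for $s\in\sigma$, $t\notin\sigma$ and vice versa) to obtain $A'_E\circ\partial''_E=\partial''_{E^\perp}\circ A'_E$, which is the harmonicity of $E$ and hence of $\varphi$. The additional remarks on well-definedness of $\pi_\sigma$ into the component $G_r(H)$ and on the Hilbert--Schmidt bookkeeping are sensible refinements the paper leaves implicit, but they do not change the argument.
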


\begin{proof} To prove the theorem, it is sufficient to show that
for any moving flag $\mathcal E=(E_1,\ldots,E_n)$, corresponding to
a $\mathcal J^2_\sigma$-holomorphic map $\psi_{\mathcal E}:
M\to\mathcal F$, the bundle $E:=\bigoplus_{i\in\sigma}\,E_i$ is
harmonic. The holomorphicity of the map $\psi_{\mathcal E}$ means
that
$$
A'_{ij}=0=A''_{ji}\ \quad\text{if}\quad
\begin{cases} i>j\
&\text{and}\quad i,j\in\sigma\ \text{or}\ i,j\notin\sigma, \\
i<j\ &\text{and}\quad i\in\sigma,j\notin\sigma\ \text{or}\
i\notin\sigma, j\in\sigma.
\end{cases}
$$
We have to prove that the bundle $E$, i.e. that
$$
A'_E\circ\partial''_E=\partial''_{E^\perp}\circ A'_E.
$$

Take $s<t$ with $s\in\sigma$, $t\notin\sigma$. Then
\begin{multline}
0=\pi_t\sum_i\left[\frac{\partial}{\partial\bar
z}\pi_i\frac{\partial}{\partial z} - \frac{\partial}{\partial
z}\pi_i\frac{\partial}{\partial\bar z}\right]\pi_s=
\sum_i(A''_{ti}A'_{is}-A'_{ti}A''_{is})=\\
\sum_{i\notin\sigma}A''_{ti}A'_{is}-\sum_{i\in\sigma}A'_{ti}A''_{is}=
\left(\sum_{i\notin\sigma}A''_{ti}\right)
\left(\sum_{i\notin\sigma}A'_{is}\right)-
\left(\sum_{i\in\sigma}A'_{ti}\right)
\left(\sum_{i\in\sigma}A''_{is}\right)\\
=\pi_t\left(\partial''_{E^\perp}\circ
A'_E-A'_E\circ\partial''_E\right)\pi_s.
\end{multline}

Analogous relations are satisfied for $s>t$, which implies that
$A'_E\circ\partial''_E=\partial''_{E^\perp}\circ A'_E$, i.e. $E$ is
harmonic.
\end{proof}

In the case when $M$ is the Riemann sphere $\mathbb P^1$, it is
possible to prove a conversion of the theorem~\ref{direct}, based on
an infinite-dimensional extension of Birkhoff--Grothendieck
classification theorem for holomorphic vector bundles over $\mathbb
P^1$, considered in the next Section~\ref{ssec35}.


\subsection{Infinite-dimensional version of the Birkhoff--Grothedieck theorem}
\label{ssec35}


Recall that the standard Birkhoff--Grothendieck theorem asserts that
any holomorphic vector bundle $E$ of rank $d$ over $\mathbb P^1$ is
equivalent to the direct sum of holomorphic line bundles $\mathcal
O(\kappa_1)\oplus\ldots\oplus\mathcal O(\kappa_d)$ with some
integers $\kappa_1\geq\ldots\geq\kappa_d$.

In terms of transition functions a holomorphic vector bundle $E$
over $\mathbb P^1$ is determined by an invertible holomorphic
$d\times d$-matrix function $f$, defined in a neighbourhood $U_0\cap
U_\infty$ of the equator of the Riemann sphere $\mathbb P^1$. Here
the sets $U_0:=\mathbb P^1\setminus\{\infty\}$ and
$U_\infty:=\mathbb P^1\setminus\{0\}$ form the standard open
covering of $\mathbb P^1$. In these terms the Birkhoff--Grothendieck
theorem is equivalent to the factorization of the transition
function $f$ in the form
$$
f(z)=f_0(z)d(z)f_\infty(z)
$$
where $f_0$ (resp. $f_\infty$) is a holomorphic invertible matrix
function in $U_0$ (resp. $U_\infty$) and $d(z)$ is the diagonal
matrix function of the form
$$
d(z)=\text{diag}(z^{\kappa_1},\ldots,z^{\kappa_d}).
$$

There exists still another formulation of Birkhoff--Grothendieck
theorem in the form of Harder--Narasimhan filtration. Suppose that
$E$ is a holomorphic vector bundle $E$ of rank $d$ over $\mathbb
P^1$, identified with a subbundle of the trivial rank $d$ bundle
over $\mathbb P^1$. Then there exists a filtration of $E$ by
holomorphic subbundles
$$
0=\mathcal B_0\subset\mathcal B_1\subset\ldots\subset\mathcal B_k=E,
$$
having the quotients of the form
$$
\left.\mathcal B_i\right/\mathcal B_{i-1}\,\cong\,
\underbrace{L^{\beta_i}\oplus\ldots\oplus L^{\beta_i}}_{\mbox{$b_i$
times}},
$$
where $L^{\beta_i}$ is the $\beta_i$-th power of the tautological
line bundle $L$ over $\mathbb P^1$ and $\beta_1>\dots>\beta_k$. The
subbundle $\mathcal B_i$ can be defined as the smallest holomorphic
subbundle of $E$, containing the images of all meromorphic sections
of $E$ with divisors of degree, greater or equal to $\beta_i$. Using
the Hermitian metric on $\mathbb C^d$, we can identify the quotient
$\mathcal B_i/\mathcal B_{i-1}$ with the orthogonal complement $B_i$
of $\mathcal B_{i-1}$ in $\mathcal B_i$.

In the infinite-dimensional setting the Birkhoff--Grothendieck
theorem, proved in \cite{Ser1},\cite{Ser2}, implies that any
holomorphic Hilbert-space bundle $E$ over $\mathbb P^1$ with the
structure group $\text{GL}_{\text{HS}}(H)$, consisting of invertible
operators of the form $I+T$ is equivalent to the direct sum of a
finite number of holomorphic line bundles and a trivial
Hilbert-space bundle.

In terms of transition functions a holomorphic Hilbert-space bundle
$E$ is determined by a holomorphic operator function $F(z)=I+T(z)$
with values in the group $\text{GL}_{\text{HS}}(H)$, defined in the
neighborhood $U_0\cap U_\infty$. In these terms the
Birkhoff--Grothendieck theorem is equivalent to the factorization of
the transition function $F$ in the form
$$
F(z)=F_0(z)D(z)F_\infty(z)
$$
where $F_0(z)=I+T_0(z)$ (resp. $F_\infty(z)=I+T_\infty(z)$) is a
holomorphic operator function in $U_0$ (resp. $U_\infty$) with
values in $\text{GL}_{\text{HS}}(H)$ and $D(z)$ is the diagonal
operator function of the form
$$
D(z)=\text{diag}(z^{\kappa_1},z^{\kappa_2},\ldots)
$$
where $\kappa_1\geq\kappa_2\geq\ldots$ are some integers such that
all of them, apart from a finite number, are equal to 0.

In terms of Harder--Narasimhan filtration this theorem means that
there exists a filtration of $E$ by holomorphic subbundles
\begin{equation}
\label{har-nar} 0=\mathcal B_0\subset\mathcal
B_1\subset\ldots\subset\mathcal B_s=E,
\end{equation}
having the quotients of the form
$$
\left.\mathcal B_i\right/\mathcal B_{i-1}\,\cong\,
\underbrace{L^{\beta_i}\oplus\ldots\oplus L^{\beta_i}}_{\mbox{$b_i$
times}}\cong\,b_iL^{\beta_i}
$$
for $i=1,\ldots,s$, $i\neq k$, and
$$
\left.\mathcal B_k\right/\mathcal B_{k-1}\cong E_k=W_{\text{in}}
$$
where $\beta_1>\dots>\beta_s$. Using the Hermitian metric on $H$, we
can identify the quotient $\mathcal B_i/\mathcal B_{i-1}$ with the
orthogonal complement $B_i$ of $\mathcal B_{i-1}$ in $\mathcal B_i$.
Note that the induced complex structure on $\mathcal B_i/\mathcal
B_{i-1}$ coincides with the complex KM-structure on $B_i$.


\subsection{Twistor description of harmonic spheres in Hilbert--Schmidt
Grassmannians}
\label{ssec36}


Now we are ready to formulate the converse of theorem \ref{direct}.

\begin{theorem}
\label{converse} Let $\varphi: \mathbb P^1\to G_r(H)$ be a harmonic
map. Then there exist a Hilbert--Schmidt flag bundle
$$
\pi_\sigma:F_{\mathbf r}(H)\longrightarrow G_r(H)
$$
and a $\mathcal J^2_\sigma$-holomorphic map $\psi: \mathbb P^1\to
F_{\mathbf r}(H)$ such that $\varphi$ coincides with the projection
$\pi_\sigma\circ\psi$ of the map $\psi$.
\end{theorem}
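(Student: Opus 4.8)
The plan is to adapt to the infinite-dimensional setting the Burstall--Wood argument that yields the converse half of Theorem~\ref{th111}, substituting the classical Birkhoff--Grothendieck theorem by its Hilbert--Schmidt version from Section~\ref{ssec35}. First I would represent the harmonic map by a subbundle $E\subset\mathbb P^1\times H$ with fibres in $G_r(H)$. Harmonicity of $\varphi$ means, by Section~\ref{ssec32}, that $A'_E\in\mathrm{Hom}(E,E^\perp)$ is holomorphic with respect to the KM-structures on $E$ and $E^\perp$; in particular $E$ and $E^\perp$, endowed with their KM-structures, are holomorphic Hilbert-space bundles over $\mathbb P^1$ whose transition cocycles take values in operators of the form $I+T$ with $T$ Hilbert--Schmidt, so the infinite-dimensional Birkhoff--Grothendieck theorem of Section~\ref{ssec35} applies to them and supplies a finite Harder--Narasimhan filtration~\eqref{har-nar} of each.

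Next I would construct the harmonic sequence of $\varphi$. Put $E^{(0)}=E$ and let the Gauss bundle $\partial'E^{(0)}\subset E^\perp$ be the subbundle generated by the image of $A'_E$; since $A'_E$ is a holomorphic morphism of holomorphic bundles over the Riemann sphere, its image extends across its isolated zeros to an honest holomorphic subbundle, and one verifies — exactly as in the finite-dimensional case, using the flatness of the trivial bundle $\mathbb P^1\times H$ and the curvature identities of Section~\ref{ssec32} — that $\partial'E^{(0)}$ is again harmonic and again belongs to $\mathrm{Gr}_{\mathrm{HS}}(H)$, its projections retaining their Fredholm and Hilbert--Schmidt character. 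Iterating $\partial'$ to the right and $\partial''$ to the left produces a two-sided chain of pairwise interlocking harmonic subbundles. The decisive point is that this chain is \emph{finite}: by the Harder--Narasimhan filtration of $E$ and $E^\perp$ the genuinely ``curved'' part consists of finitely many line bundles $b_iL^{\beta_i}$ of bounded degree, the remaining summand being a trivial Hilbert-space bundle; since each $\partial'$- or $\partial''$-step moves degrees in a controlled way, it can be nontrivial only finitely often before the construction reaches the flat part and stabilises.

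From the terminated chain I would read off a moving flag. The orthogonal-complement pieces $B_i$ of the successive Gauss bundles assemble into a decomposition $\mathbb P^1\times H=\bigoplus_{i=1}^n E_i$ into pairwise orthogonal subbundles: finitely many finite-rank Gauss bundles $E_i$ of ranks $r_i$ ($i\neq k,l$) together with the two infinite-dimensional ``ends'' of the chain $W_{\mathrm{in}}=E_k$ and $W_{\mathrm{out}}=E_l$. This is a moving flag of virtual dimension $\mathbf r=(r_1,\dots,r_n)$, hence defines $\psi=\psi_{\mathcal E}\colon\mathbb P^1\to F_{\mathbf r}(H)$. Taking $\sigma\subset\{1,\dots,n\}$ to consist of exactly those $i$ with $E_i\subset E$, one gets $\bigoplus_{i\in\sigma}E_i=E$, so $\varphi=\pi_\sigma\circ\psi$, and $k\in\sigma$, $l\notin\sigma$ since $W_{\mathrm{in}}\subset E$ and $W_{\mathrm{out}}\subset E^\perp$. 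Finally $\psi$ is $\mathcal J^2_\sigma$-holomorphic essentially by construction: the components $A'_{ij}$ of $\psi_*$ are nonzero only along the steps of the harmonic chain, and these are precisely the components that $\mathcal J^2_\sigma$ prescribes to be of type $(1,0)$ for this $\sigma$, while all the forbidden components vanish.

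I expect the main obstacle to be the termination of the harmonic chain in the Hilbert--Schmidt setting, hand in hand with the verification that the operations $\partial'$ and $\partial''$ preserve membership in $\mathrm{Gr}_{\mathrm{HS}}(H)$, i.e. that each Gauss bundle again has Fredholm projection onto $H_+$ and Hilbert--Schmidt projection onto $H_-$ with a well-defined virtual dimension. Both require extracting quantitative degree control from the Hilbert--Schmidt Birkhoff--Grothendieck theorem of Section~\ref{ssec35}; by contrast the purely local input — extending images of holomorphic morphisms across their zeros and the curvature computations showing that the Gauss bundles inherit harmonicity — is routine and carries over verbatim from the finite-dimensional Burstall--Wood theory.
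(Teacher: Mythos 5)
You begin exactly as the paper does: represent $\varphi$ by a subbundle $E\subset\mathbb P^1\times H$, observe that harmonicity makes $A'_E$ a holomorphic section of $\mathrm{Hom}(E,E^\perp)$ for the KM-structures, and invoke the infinite-dimensional Birkhoff--Grothendieck theorem of Section~\ref{ssec35} to obtain Harder--Narasimhan filtrations of $E$ and of $E^\perp$. But you then set this aside and try to build the moving flag out of the harmonic sequence $E,\ \partial'E,\ \partial'\partial'E,\dots$ of Gauss bundles. That is where the gap is. For a Grassmannian target (already in finite dimensions, let alone for $G_r(H)$) the successive Gauss bundles are in general not pairwise orthogonal and their ranks are not controlled, so they do not ``assemble into a decomposition $\mathbb P^1\times H=\bigoplus_i E_i$ into pairwise orthogonal subbundles''; this is exactly why the $\mathbb{CP}^n$-style harmonic-sequence classification does not carry over verbatim to Grassmannians and why Burstall--Salamon work with filtrations instead. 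You also defer the two hardest points of your own route --- termination of the chain and preservation of membership in $\mathrm{Gr}_{\mathrm{HS}}(H)$ under $\partial'$ --- to ``quantitative degree control'' that you never supply; in the paper's argument neither issue arises.

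The paper's proof reads the flag off directly from the two Harder--Narasimhan filtrations: the orthogonal pieces $B_1,\dots,B_s$ of the filtration of $E$ and $C_1,\dots,C_t$ of the filtration of $E^\perp$ are merged into a single list $E_1,\dots,E_n$ ordered by increasing degree $\delta_i$, the set $\sigma$ is defined by $E=\bigoplus_{i\in\sigma}E_i$, and finiteness is automatic because each filtration has finitely many steps. The $\mathcal J^2_\sigma$-holomorphy of $\psi_{\mathcal E}$, which you dismiss as ``essentially by construction'', is the substantive computation: for $i>j$ with $i,j\in\sigma$ one uses that $E_i$ lies in a holomorphic subbundle $\mathcal B_p$ of $E$ orthogonal to $E_j$ to get $A''_{ji}=0$ and hence $A'_{ij}=0$; for $i<j$ with $i\notin\sigma$, $j\in\sigma$ one uses that $A'_E$ is a holomorphic morphism and that $A'_E(\mathcal B_p)$ is spanned by meromorphic sections of $E^\perp$ whose divisors have degree strictly larger than $\delta_j$, forcing $A'_E(E_j)\subset\bigoplus_{q\notin\sigma,\,q>j}E_q$ and hence $A'_{ij}=0$; the remaining cases follow by symmetry, using that $E^\perp$ is harmonic whenever $E$ is. If you replace your harmonic-sequence step by this direct degree argument on the filtration pieces, the rest of your outline (the choice of $\sigma$ with $k\in\sigma$, $l\notin\sigma$, and $\varphi=\pi_\sigma\circ\psi$) is correct.
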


\begin{proof} The proof of this theorem goes on along the same lines
as in the finite-dimensional case (cf. \cite{Bur-Sal}).

Associate with the harmonic map $\varphi: \mathbb P^1\to G_r(H)$ the
bundle $E$ over $\mathbb P^1$ and provide it with the complex
KM-structure. With respect to this complex structure $E$ becomes a
holomorphic Hilbert-space bundle. So by the Birkhoff--Grothendieck
theorem (in Harder--Narasimhan form) there exists a filtration of
$E$ by holomorphic subbundles
$$
0=\mathcal B_0\subset\mathcal B_1\subset\ldots\subset\mathcal B_s=E,
$$
with the quotients of the form
$$
\left.\mathcal B_i\right/\mathcal B_{i-1}\,\cong\,b_iL^{\beta_i}
$$
for $i=1,\ldots,s$, $i\neq k$, and $\left.\mathcal
B_k\right/\mathcal B_{k-1}\cong E_k=W_{\text{in}}$, such that
$\beta_1>\dots>\beta_s$ (where we set $\beta_k=0$, $b_k=1$).

We can construct an analogous filtration for the orthogonal
complement $E^\perp$ of $E$ in $\mathbb P^1\times H\to\mathbb P^1$:
$$
0=\mathcal C_0\subset\mathcal C_1\subset\ldots\subset\mathcal C_t=
E^\perp
$$
with quotients of the form
$$
\left.\mathcal C_i\right/\mathcal C_{i-1}\,\cong\,c_iL^{\gamma_i}
$$
for $i=1,\ldots,t$, $i\neq l$, and
$$
\left.\mathcal C_l\right/\mathcal C_{l-1}\cong E_l=W_{\text{out}}
$$
such that $\gamma_1>\dots>\gamma_t$ (where we also set $\gamma_l=0$,
$c_l=1$). We identify again the quotient $\mathcal C_i/\mathcal
C_{i-1}$ with the orthogonal complement $C_i$ of $\mathcal C_{i-1}$
in $\mathcal C_i$.

We collect now the subbundles $B_1,\dots,B_s,C_1,\dots,C_t$ into a
single collection of $n=s+t$ subbundles, denoted by $E_1,\dots,E_n$,
so that each of $E_i$ is isomorphic to the direct sum of $e_i$
copies of $L^{\delta_i}$ and $\delta_1\leq\dots\leq\delta_n$. (If
for some $j$ we have $\delta_j=\delta_{j+1}$, we arrange the
associated subbundles $E_j$, $E_{j+1}$ in such a way that $E_j$
corresponds to some $B_p$ and $E_{j+1}$ to some $C_q$.) We introduce
the subset $\sigma\subset\{1,2,\dots,n\}$, uniquely defined by the
equalities
$$
E=\bigoplus_{i\in\sigma}E_i,\quad E^\perp=\bigoplus_{i\notin\sigma}
E_i.
$$

Denote by $\mathcal E:=(E_1,\dots,E_n)$ the moving flag, determined
by the subbundles $E_1,\ldots,E_n$, and by $\psi_{\mathcal E}:
\mathbb P^1\to F_{\mathbf r}(H)$ the corresponding map, associated
with $\mathcal E$. We have to prove that this map is $\mathcal
J^2_\sigma$-holomorphic. In other words, we should prove that
$$
A'_{ij}=0=A''_{ji}\ \quad\text{if}\quad
\begin{cases} i>j\
&\text{and}\quad i,j\in\sigma\ \text{or}\ i,j\notin\sigma, \\
i<j\ &\text{and}\quad i\in\sigma,j\notin\sigma\ \text{or}\
i\notin\sigma, j\in\sigma.
\end{cases}
$$

Suppose first that $i>j$ and $i,j\in\sigma$. Then
$\delta_i>\delta_j$ and the subbundle $E_i$ is contained in some
holomorphic subbundle $\mathcal B_p$ of $E$, orthogonal to $E_j$. It
follows that $A_{ji}''=0$, which implies also that $A'_{ij}=0$. The
case $i,j\notin\sigma$ is treated in a similar way.

Suppose next that $i<j$ and $i\notin\sigma, j\in\sigma$. Then
$E_j=B_p$ for some $B_p\subset\mathcal B_p$. Since $E$ is harmonic,
it follows that differential $dz\otimes A'_E$ is holomorphic (cf.
equations \eqref{harm-eq1}, \eqref{harm-eq2} in Sec.~\ref{ssec21}).
Here, $A'_E$ is considered as a section of the holomorphic bundle
$\text{Hom}(E,E^\perp)$. Since the image $A'_E(\mathcal B_p)$ is
spanned by meromorphic sections of $E^\perp$ with divisors of
degree, greater than $\delta_j+1$, we have
$$
A'_E(E_j)\subset\bigoplus_{q\notin\sigma,\,q>j} E_q.
$$
Hence, $A'_{ij}=0$ for $i<j$, implying also that $A''_{j\,i}=0$.The
case $i\in\sigma,j\notin\sigma$ is treated in a similar way, using
the fact that the subbundle $E^\perp$ is harmonic if $E$ has this
property.
\end{proof}


\subsection{Reducing the length of harmonic bundles}
\label{ssec37}


Consider again the Harder--Narasimhan filtration \eqref{har-nar}
from Sec.~\ref{ssec35}:
$$
0=\mathcal B_0\subset\mathcal B_1\subset\ldots\subset\mathcal B_s=E
$$
with the quotients
$$
\left.\mathcal B_i\right/\mathcal B_{i-1}\,\cong\,
\underbrace{L^{\beta_i}\oplus\ldots\oplus L^{\beta_i}}_{\mbox{$b_i$
times}}\cong\,b_iL^{\beta_i}
$$
for $i=1,\ldots,s$, $i\neq k$, and $\left.\mathcal
B_k\right/\mathcal B_{k-1}\cong E_k=W_{\text{in}} $ with
$\beta_1>\dots>\beta_s$. The number $\beta_1-\beta_s$ is called the
\textit{length} of the bundle $E$.

Burstall and Salamon in \cite{Bur-Sal} have proposed a procedure of
reducing the length of the original bundle $E$, which is similar to
Uhlenbeck's ''subtracting-a-uniton'' construction or Valli's
construction, reducing the energy of a harmonic bundle. The
Burstall--Salamon's procedure is immediately extended to the case of
infinite-dimensional harmonic bundles. Since the argument
essentially repeats the corresponding argument from \cite{Bur-Sal}
we formulate only the main result.

\begin{theorem}
\label{bur-sal3} For a given harmonic bundle $E \subset H \times
P^{1}$ there exists a finite sequence $E^{0}$,$E^{1},\ldots,E^{N} =
E$ of harmonic bundles such that
\begin{enumerate}
\item[(i)] $0=\textrm{length}(E^{0}) < \textrm{length}(E^{1})< \ldots
<\textrm{length}(E^{N}) = \beta_{1}-\beta_{s}$,
\item[(ii)]
$E^{i-1}$ is obtained from $E^{i}$ or $(E^{i})^{\perp}$ by removing a
holomorphic or anti-holomorphic subbundle.
\end{enumerate}
\end{theorem}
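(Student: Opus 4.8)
The plan is to mirror the Burstall--Salamon induction from \cite{Bur-Sal}, now applied to the Harder--Narasimhan filtration produced by the infinite-dimensional Birkhoff--Grothendieck theorem from Section~\ref{ssec35}. The starting point is the harmonic bundle $E\subset\mathbb P^1\times H$ with its KM-structure, and its filtration $0=\mathcal B_0\subset\mathcal B_1\subset\dots\subset\mathcal B_s=E$ with quotients $b_iL^{\beta_i}$ (resp. $W_{\text{in}}$ at the exceptional index $k$) and $\beta_1>\dots>\beta_s$. First I would recall that, since $E$ is harmonic, the section $A'_E\in\mathrm{Hom}(E,E^\perp)$ is holomorphic with respect to the KM-structures, as established in Section~\ref{ssec32}, and analogously $A''_E=(A'_{E^\perp})^*$ is anti-holomorphic. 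The key structural observation, exactly as in \cite{Bur-Sal}, is that the ``top'' piece $B_1$ of the Harder--Narasimhan filtration of $E$ behaves rigidly: because $A'_E$ raises the order of vanishing of meromorphic sections by at least one, one shows that $A'_E(\mathcal B_1)\subset\mathcal B_1^{\perp}$-part of $E^\perp$ with strictly larger slope, and hence the subbundle generated by $B_1$ is $\bar\partial_E$-holomorphic and ``$\varphi$-invariant'' in the appropriate sense. This is what allows $B_1$ (or its analogue on the $E^\perp$ side, $C_1$) to be stripped off.

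Second, I would make precise the single inductive step. Define $E^{N}:=E$ and, at stage $i$, let $E^{i-1}$ be obtained from $E^{i}$ (or from $(E^{i})^{\perp}$) by removing the extremal holomorphic subbundle coming from the top or bottom of its Harder--Narasimhan filtration. Concretely, if $\beta_1$ is the top slope one passes to $E^{i}\ominus B_1$ (removing a holomorphic subbundle), while if one wants to kill the bottom slope $\beta_s$ one works with $(E^{i})^{\perp}$ and removes an anti-holomorphic subbundle there; the choice at each stage is governed by which end of the filtration one elects to collapse. The two facts to verify are: (a) the resulting bundle $E^{i-1}$ is again harmonic --- this follows because removing a $\varphi$-invariant holomorphic subbundle preserves the harmonic-map equation $A'_E\circ\partial''_E=\partial''_{E^\perp}\circ A'_E$, a computation identical to the block-matrix manipulation in the proof of Theorem~\ref{direct}; and (b) the length drops strictly, i.e. $\mathrm{length}(E^{i-1})<\mathrm{length}(E^{i})$, because one has removed the summand realizing either $\beta_1$ or $\beta_s$, shrinking the spread of slopes. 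Iterating, after finitely many steps (at most $\beta_1-\beta_s$ many, by the slope count) one reaches a bundle $E^{0}$ whose Harder--Narasimhan filtration has a single slope, i.e. $\mathrm{length}(E^{0})=0$. This yields the chain in (i) and the removal property (ii).

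Third, I would note that the well-definedness of the ``removal'' operation in the Hilbert--Schmidt setting requires a small amount of care: one must check that after removing a finite-rank holomorphic line-bundle summand (or the exceptional infinite-dimensional piece) the remaining subbundle is still of the form admitted by $G_{r'}(H)$ for the appropriate virtual dimension $r'$, so that it is a legitimate object in $\mathrm{Gr}_{\text{HS}}(H)$. This is precisely where the infinite-dimensional Birkhoff--Grothendieck theorem of \cite{Ser1},\cite{Ser2} is indispensable: all but finitely many slopes vanish, so the ``strippable'' pieces at the two ends of the filtration are finite-dimensional line bundles (the lone infinite-dimensional quotient $W_{\text{in}}$ sits at an interior index with slope $\beta_k=0$ and is never at an extreme unless the length is already zero), and removing a finite-rank holomorphic subbundle changes the Fredholm index of $\mathrm{pr}_+$ in a controlled way. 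Granting these verifications, the theorem follows by the same induction as in the finite-dimensional case, and since --- as the statement itself says --- the argument essentially repeats \cite{Bur-Sal}, I would present it in compressed form.

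I expect the main obstacle to be step (a) above: confirming that the removed holomorphic subbundle is genuinely $\varphi$-invariant --- equivalently, that $A'_E$ maps it into a complementary piece with strictly larger slope --- and hence that harmonicity survives the quotient. In the finite-dimensional Burstall--Salamon argument this rests on a slope/degree inequality for $A'_E$ viewed as a holomorphic bundle map; in the Hilbert--Schmidt case one must re-run that inequality with the virtual degree in place of the ordinary degree and make sure the Hilbert--Schmidt/Fredholm bookkeeping is consistent. Everything else --- the block-matrix computation of harmonicity for the quotient, the strict drop in length, the finiteness of the process --- is routine once this invariance is in hand.
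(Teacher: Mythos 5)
The paper itself gives no proof of this theorem --- it says explicitly that the argument repeats the one in \cite{Bur-Sal} and only states the result --- and your proposal is a faithful outline of exactly that intended route: strip extremal Harder--Narasimhan pieces, verify harmonicity and the strict length drop at each step, and use the infinite-dimensional Birkhoff--Grothendieck theorem to keep the bookkeeping finite. One small correction: your parenthetical claim that $W_{\text{in}}$ ``is never at an extreme unless the length is already zero'' is false --- since $\beta_k=0$, the piece $W_{\text{in}}$ sits at the top (resp.\ bottom) of the filtration whenever all the other slopes are negative (resp.\ positive) --- but this is harmless, because for $s\ge 2$ at least one extreme is a finite sum of line bundles, and the freedom in (ii) to pass to $(E^{i})^{\perp}$ and to remove either a holomorphic or an anti-holomorphic subbundle always lets one strip from that end.
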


According to this theorem, everything is reduced to the case of
harmonic bundles of length zero which are described by the following

\begin{proposition}
\label{bur-sal4} Any harmonic bundle of length zero has the form
$$
E=F\ominus F_{1}
$$
where $F_{1}$ and $F$ are holomorphic subbundles of $\mathbb P^1
\times H$, satisfying the condition $\frac{\partial}{\partial z}
\Gamma(F_{1}) \subset \Gamma(F)$.
\end{proposition}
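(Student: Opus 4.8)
The plan is to transplant to the Hilbert--Schmidt setting the argument by which Burstall and Salamon characterise length-zero harmonic bundles in \cite{Bur-Sal}, the new ingredient being the infinite-dimensional Birkhoff--Grothendieck theorem of Section~\ref{ssec35}. First I would unwind the hypothesis. Endow the harmonic bundle $E$ with its KM-structure; by the Birkhoff--Grothendieck theorem it carries a Harder--Narasimhan filtration $0=\mathcal B_0\subset\ldots\subset\mathcal B_s=E$ with line-bundle quotients $b_iL^{\beta_i}$ (and the $W_{\text{in}}$-quotient at $i=k$, $\beta_k=0$), and ``length zero'' means $\beta_1=\beta_s$, i.e. the filtration degenerates to a single step. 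Thus, as an abstract holomorphic bundle, $E$ is trivial. It is essential to stress that this does \emph{not} make $E$ a holomorphic subbundle of $\mathbb P^1\times H$ in the naive sense --- the inclusion $E\hookrightarrow\mathbb P^1\times H$ is holomorphic precisely when $A''_E=0$ --- and the subbundle $F_1$ is what will absorb the failure of holomorphy.

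Next I would construct $F$ and $F_1$. Let $F$ be the smallest genuine holomorphic subbundle of the trivial bundle $\mathbb P^1\times H$ containing $E$; concretely $F$ is the closed span of $\Gamma(E)$ together with all its iterated $\partial/\partial\bar z$-derivatives. The decomposition $\partial_{\bar z}s=\partial''_E s+A''_E s$ for $s\in\Gamma(E)$ shows that the first step adjoins only the ``$\partial''$-Gauss piece'' $\operatorname{Im}A''_E\subset E^{\perp}$, and the decisive point --- this is exactly where length zero is used --- is that the enlargement then stops: because the Harder--Narasimhan filtration of $E$ has no spread, the Gauss-type sequence $E,\ \partial_{\bar z}\Gamma(E),\ \partial_{\bar z}^{2}\Gamma(E),\ldots$ cannot keep producing new line-bundle directions, so $F=\overline{\Gamma(E)+\partial_{\bar z}\Gamma(E)}$. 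One must also verify that $F$ is an honest subbundle with locally constant virtual dimension and projections of the right Fredholm/Hilbert--Schmidt type; over $\mathbb P^1$ this reduces to the fact that the image of a holomorphic bundle morphism saturates to a subbundle, controlled by the classification of Section~\ref{ssec35}. Set $F_1:=F\ominus E\subset E^{\perp}$, so that $E=F\ominus F_1$ tautologically.

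Finally I would verify the three required properties. Holomorphy of $F$ is built into its definition. For $F_1$ one checks that for $v\in\Gamma(F_1)$ the component $\pi_E\partial_{\bar z}v=-(A'_E)^{\ast}v$ vanishes, using that for harmonic $E$ the form $dz\otimes A'_E$ is a holomorphic section of $\operatorname{Hom}(E,E^{\perp})$ together with the description of $F_1$ through the $\partial''$-Gauss data and the one-step stabilisation; hence $\partial_{\bar z}\Gamma(F_1)\subset\Gamma(F_1)$, so $F_1$ is holomorphic. The remaining condition $\partial/\partial z\,\Gamma(F_1)\subset\Gamma(F)$ is precisely the bundle form of the harmonic-map equation $\bar\delta\delta\varphi=0$ (equation~\eqref{harm-eq1}): applying $\partial/\partial z$ to a section of $F_1$ and decomposing along $E\oplus F_1\oplus F^{\perp}$, the $F^{\perp}$-component is killed by harmonicity together with the stabilisation already established, while the $E$- and $F_1$-components lie in $\Gamma(F)$ by construction. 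Symmetry between $E$ and $E^{\perp}$ (which is again harmonic, cf. Section~\ref{ssec32}) takes care of the cases I have not written out.

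The hard part, I expect, is the one-step stabilisation together with the regularity of $F$ and $F_1$: one must show that for a length-zero harmonic bundle the $\partial''$- and $\partial'$-Gauss iterations genuinely close up after a single step, and that the resulting $F$ and $F_1$ have locally constant virtual dimension, so that together with $E$ and $F^{\perp}$ they define an honest moving flag of some type $\mathbf r=(r_1,\ldots,r_n)$ and thus a map into a Hilbert--Schmidt flag manifold $F_{\mathbf r}(H)$. In the finite-dimensional case of \cite{Bur-Sal} this is a slope/dimension count for semistable bundles of equal slope; here it has to be run entirely through the infinite-dimensional Birkhoff--Grothendieck theorem, with care that only finitely many line-bundle summands are ever created.
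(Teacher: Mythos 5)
The paper states this proposition without proof (the surrounding text defers to the finite-dimensional argument of Burstall--Salamon), so your proposal has to be judged against the argument you claim to be transplanting. Your skeleton is the right one: take $F$ to be the smallest holomorphic subbundle of $\mathbb P^1\times H$ containing $E$, i.e.\ the closure of $\Gamma(E)$ under iterated $\partial/\partial\bar z$, and set $F_1:=F\ominus E$. But the step you yourself single out as decisive --- the ``one-step stabilisation'' $F=\overline{\Gamma(E)+\partial_{\bar z}\Gamma(E)}$ --- is false, and the length-zero hypothesis does not rescue it. Concretely: let $V\subset H_+$ be a $5$-dimensional subspace, $f:\mathbb P^1\to\mathbb P(V)\cong\mathbb P^4$ the rational normal curve with harmonic sequence $\underline{\varphi_0},\dots,\underline{\varphi_4}$ and osculating bundles $f_{(j)}=\underline{\varphi_0}\oplus\dots\oplus\underline{\varphi_j}$, and put $E:=\underline{\varphi_2}\oplus\underline{(H_+\ominus V)}$. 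Then $E$ is a harmonic subbundle of $\mathbb P^1\times H$; since $\deg\underline{\varphi_2}=0$ it is holomorphically trivial for its KM-structure, hence of length zero. Yet $\operatorname{Im}A''_E=\underline{\varphi_1}$, and $E\oplus\underline{\varphi_1}$ is not closed under $\partial_{\bar z}$ because sections of $\underline{\varphi_1}$ have a nonzero $\partial_{\bar z}$-component along $\underline{\varphi_0}$. The correct answer here is $F=f_{(2)}\oplus\underline{(H_+\ominus V)}$ and $F_1=f_{(1)}$, reached only after two Gauss iterations, and by taking higher Veronese curves the number of required iterations can be made arbitrarily large.

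This is not a cosmetic slip. The genuinely hard point is that $F_1=F\ominus E$ is holomorphic, which (given that $F$ is holomorphic) amounts to $F_1\perp\operatorname{Im}A'_E$, since $\pi_E\partial_{\bar z}v=-(A'_E)^{*}v$ for $v\in\Gamma(E^\perp)$; your verification of this runs ``through the description of $F_1$ by the $\partial''$-Gauss data and the one-step stabilisation'', so it collapses together with that claim. (Your argument for $\partial_z\Gamma(F_1)\subset\Gamma(F)$ likewise only treats the generators $A''_Es$ of the first Gauss step.) What length zero actually buys is a slope comparison: because all Harder--Narasimhan quotients of $E$ have equal degree, the holomorphy of $dz\otimes A'_E$ forces $\operatorname{Im}A'_E$ into the low-degree part of the filtration of $E^\perp$ while the successively adjoined bundles $\operatorname{Im}A''$ stay in the high-degree part; this is what makes $F_1$ orthogonal to $\operatorname{Im}A'_E$ and gives $\partial_z\Gamma(F_1)\subset\Gamma(F)$, and one must also invoke the infinite-dimensional Birkhoff--Grothendieck theorem to see that the iteration terminates after finitely many steps and that each stage is an honest subbundle of constant virtual dimension. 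None of this degree bookkeeping appears in your sketch, so as written the proposal does not establish the proposition.
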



\end{document}